\def\frak{\mathfrak}
\def\Bbb{\mathbb}
\def\Cal{\mathcal}
\newtheorem{thm}{Theorem}
\newtheorem*{thm*}{Theorem}
\newtheorem{prop}[thm]{Proposition}
\newtheorem*{prop*}{Proposition}
\newtheorem*{lem*}{Lemma}
\newtheorem{kor}[thm]{Corollary}
\newtheorem*{kor*}{Corollary}
\newcommand{\Ad}{\operatorname{Ad}}
\renewcommand{\exp}{\operatorname{exp}}
\newcommand{\id}{\operatorname{id}}
\newcommand{\End}{\operatorname{End}}
\newcommand{\fg}{{\frak g}}
\newcommand{\x}{\times}
\renewcommand{\o}{\circ}
\let\ccdot\cdot
\def\cdot{\hbox to 2.5pt{\hss$\ccdot$\hss}}
\newcommand{\al}{\alpha}
\newcommand{\be}{\beta}
\newcommand{\ga}{\gamma}
\newcommand{\la}{\lambda}
\newcommand{\om}{\omega}
\renewcommand{\phi}{\varphi}
\newcommand{\ph}{\varphi}
\newcommand{\ps}{\psi}
\newcommand{\si}{\sigma}
\renewcommand{\th}{\theta}
\newcommand{\Ga}{\Gamma}
\newcommand{\La}{\Lambda}
\newcommand{\Om}{\Omega}
\newcommand{\Ph}{\Phi}
\newcommand{\Ps}{\Psi}
\def\sideremark#1{\ifvmode\leavevmode\fi\vadjust{\vbox to0pt{\vss
 \hbox to 0pt{\hskip\hsize\hskip1em
 \vbox{\hsize3cm\tiny\raggedright\pretolerance10000
 \noindent #1\hfill}\hss}\vbox to8pt{\vfil}\vss}}}%
\begin{document}
\title{Equivariant quantizations for AHS--structures}

\author{Andreas \v Cap and Josef \v Silhan}

\address{AC: Institut f\"ur Mathematik, Universit\"at Wien, Nordbergstra\ss
  e 15, A--1090 Wien, Austria; JS: Max Planck Institute
  Institute for Mathematics, Vivatsgasse 7, 53111 Bonn,
  Germany} 

\email{Andreas.Cap@esi.ac.at, silhan@math.muni.cz} 

\thanks{First author supported by project P 19500--N13 of the ``Fonds
  zur F\"orderung der wissenschaftlichen Forschung'' (FWF), second
  author was supported by a Junior Fellowship of the Erwin Schr\"odinger
  Institute (ESI)}

\subjclass[2000]{primary: 53A40, 53B15, 58J70; secondary: 53A20, 53A30}

\date{April 20, 2009}

\keywords{equivariant quantization, natural quantization, parabolic
  geometry, AHS--structure, tractor calculus}

\begin{abstract}
  We construct an explicit scheme to associate to any potential symbol
  an operator acting between sections of natural bundles (associated
  to irreducible representations) for a so--called AHS--structure.
  Outside of a finite set of critical (or resonant) weights, this
  procedure gives rise to a quantization, which is intrinsic to this
  geometric structure. In particular, this provides projectively and
  conformally equivariant quantizations for arbitrary symbols on
  general (curved) projective and conformal structures.
\end{abstract}

\maketitle

\section{Introduction}\label{1}

Consider a smooth manifold $M$, two vector bundles $E$ and $F$ over
$M$ and a linear differential operator $D:\Ga(E)\to\Ga(F)$, where
$\Ga(\ )$ indicates the space of smooth sections. If $D$ is of order
at most $k$, then it has a well defined ($k$th order)
\textit{principal symbol} $\si_D$, which can be viewed as a vector
bundle map $S^kT^*M\otimes E\to F$ or as a smooth section of the vector
bundle $S^kTM\otimes E^*\otimes F$. Here $TM$ and $T^*M$ are the
tangent respectively cotangent bundle of $M$, $E^*$ is the bundle dual
to $E$, and $S^k$ denotes the $k$th symmetric power. 

A \textit{quantization} on $M$ is a right inverse to the principal
symbol map. This means that to each smooth section $\tau$ of the
bundle $S^kTM\otimes E^*\otimes F$, one has to associate a
differential operator $A_\tau:\Ga(E)\to\Ga(F)$ of order $k$ with
principal symbol $\tau$. Note that operators of order $0$ coincide
with their principal symbols, so there a unique possible quantization
in order $0$. Given any $k$th order operator $D$ with principal symbol
$\tau$, the difference $D-A_\tau$ is of order $k-1$. Iterating this, we
conclude that, having a quantization in each order $\leq k$, one
actually obtains an isomorphism between the space $Diff^k(E,F)$ of
differential operators $\Ga(E)\to\Ga(F)$ of order at most $k$ and the
space of smooth sections of the bundle $\oplus_{i=0}^kS^iTM\otimes
E^*\otimes F$.

A classical example of a quantization is provided by the Fourier
transform for smooth functions on $\Bbb R^n$. However, it is well
known that (even for $E=F=M\x\Bbb R$) there is no canonical
quantization on a general manifold $M$, but one has to make additional
choices. For our purposes, the most relevant example is to choose
linear connections on the vector bundles $E$ and $TM$. Having done
this, one obtains induced linear connections on duals and tensor
products of these bundles, and we will denote all these connections by
$\nabla$.  For a smooth section $s$ of $E$, one can then form the
$k$--fold covariant derivative $\nabla^k s$, which is a section of
$\otimes^k T^*M\otimes E$. Symmetrizing in the $T^*M$ entries, we
obtain a section $\nabla^{(k)}s$ of $S^kT^*M\otimes E$.
Viewing a symbol $\tau$ as a bundle map $S^kT^*M\otimes E\to F$, we
can simply put $A_\tau(s):=\tau(\nabla^{(k)}s)$. Clearly this defines
a differential operator $A_\tau$ of order $k$ and it is well known
that its principal symbol is $\tau$, so we have obtained a
quantization in this way.

This provides a link to geometry. Suppose that $M$ is endowed with
some geometric structure which admits a canonical connection. Then one
obtains quantizations for all natural bundles associated to this
structure. The classical example of this situation is the case when
$(M,g)$ is a Riemannian manifold. Then the natural bundles are tensor
and spinor bundles, and on each such bundle one has the Levi--Civita
connection. Hence the above procedure leads to a natural quantization
(in the sense that it is intrinsic to the Riemannian structure) for
any pair $E$ and $F$ of natural vector bundles.

At this point there arises the question whether weaker geometric
structures, which do not admit canonical connections, still do admit
natural quantizations. This problem has been originally posed in
\cite{LO} and has been intensively studied since then. The examples
above naturally lead to the two geometric structures for which this
problem has been mainly considered. On the one hand, one may replace a
single linear connection on $TM$ by a projective equivalence class of
such connections. Here two connections are considered as equivalent if
they have the same geodesics up to parametrization. On the other hand,
the most natural weakening of Riemannian metrics is provided by
conformal structures.  Here one takes an equivalence class of
(pseudo--)Riemannian metrics which are obtained from each other by
multiplication by positive smooth functions.

Projective and conformal structures fit into the general scheme of
so--called AHS--structures. These are geometric structures which admit
an equivalent description by a canonical Cartan connection modelled on
a compact Hermitian symmetric space $G/P$, where $G$ is semisimple and
$P\subset G$ is an appropriate parabolic subgroup. These geometries
and the more general class of parabolic geometries have been studied
intensively during the last years, and several striking results have
been obtained, see e.g.~\cite{CSS-BGG}. In particular, an efficient
differential calculus for these structures based on so--called tractor
bundles has been worked out in \cite{tractors}.

This general point of view has shown up in the theory of equivariant
quantizations already. Namely, it turns out that the homogeneous space
$G/P$ always contains a dense open subset (the big Schubert--cell)
which is naturally diffeomorphic to $\Bbb R^n$.  While the $G$--action
on $G/P$ cannot be restricted to this subspace, one obtains a
realization of the Lie algebra $\fg$ of $G$ as a Lie algebra of vector
fields on $\Bbb R^n$. For the homogeneous model $G/P$ and geometries
locally isomorphic to it, naturality of a quantization is then
equivalent to equivariancy for the action of this Lie algebra of
vector fields. In many articles, the question of quantizations
naturally associated to a projective and/or conformal structure is
posed in this setting. Also, the algebras corresponding to general
AHS--structures have been studied in this setting under the name
``IFFT--equivariant quantizations'', see \cite{IFFT}. It should be
pointed out however, that these methods only apply to geometries
locally isomorphic to $G/P$ (e.g.~to locally conformally flat
conformal structures). As it is well known from the theory of linear
invariant differential operators, passing from the locally flat
category to general structures is a very difficult problem.

Most of the work on natural quantizations only applies to operators on
sections of line bundles (density bundles). It was only recently that
the methods for projective structures have been extended to general
natural vector bundles in \cite{Hansoul}. The construction there uses
the Thomas--Whitehead (or ambient) description of projective
structures, which is an equivalent encoding of the canonical Cartan
connection for projective structures. This approach is only available
in the projective case, though. As mentioned in \cite{Hansoul}, there
is hope to use the Fefferman--Graham ambient metric for conformal
structures to find conformally invariant quantizations, but there
several immediate problems with this approach. For the other
AHS--structures, there is no clear analog of the ambient description.

It should be also mentioned that the results for projective structures
have been obtained using the canonical Cartan connection, see
\cite{Mathonet-Radoux}. After this article was essentially completed,
we learned about the recent preprint \cite{Mathonet-Radoux2}, in which
the Cartan approach is extended to prove existence of a natural
quantization for conformal structures and it is claimed that the
method further extends to all AHS--structures.

In this article, we use the recent advances on invariant calculi for
parabolic geometries to develop a scheme for constructing equivariant
quantizations. This scheme is explicit and uniform, it applies to all
AHS--structures and to all (irreducible) natural bundles for such
structures. As it is known from the special cases studied so far,
equivariant quantizations do not always exist, so our scheme does not
always lead to an equivariant quantization. 

To formulate the result more precisely, we need a bit more background.
It turns out that for any AHS--structure there is a family of natural
line bundles $\Cal E[w]$ parametrized by a real number $w$, the
so--called density bundles. Any natural bundle $E$ can be twisted by
forming tensor products with density bundles to obtain bundles
$E[w]:=E\otimes\Cal E[w]$. (For conformal structures, this free
parameter is known as ``conformal weight''.) Doing this to the target
bundle of differential operators, we can view a section
$\tau\in\Ga(S^kTM\otimes E^*\otimes F\otimes\Cal E[\delta])$ as the
potential symbol of an operator $\Ga(E)\to\Ga(F[\delta])$. We first
universally decompose the bundle of symbols into a finite direct sum
of subbundles. On the level of sections, we write this decomposition
as $\tau=\sum_i\tau_i$. Given such a section, our scheme constructs a
differential operator $A_\tau:\Ga(E)\to\Ga(F[\delta])$ for any choice
of weight $\delta$. The principal symbol of $A_\tau$ is
$\sum_i\ga_i\tau_i$ for real numbers $\ga_i$ which only depend on $i$,
and $\delta$ (and not on $\tau$ or on the manifold in question). We
prove that each $\ga_i$ is nonzero except for finitely many values of
$\delta$. Whenever all $\ga_i$ are non--zero, we obtain a natural
quantization by mapping $\tau$ to $A_{\sum_i\ga_i^{-1}\tau_i}$.

Our method does not only lead to an abstract proof that the set of
critical weights (i.e.~of weights $\delta$ for which some $\ga_i$
vanishes) is finite. We also get general information on the number and
size of critical weights. In each concrete example, one can determine
the set of critical weights explicitly, and this needs only finite
dimensional representation theory.

We should mention that the developments in this article are closely
related to the results in the recent thesis \cite{Kroeske} of
J.~Kroeske, in which the author systematically constructs bilinear
natural differential operators for AHS--structures and, more
generally, for parabolic geometries.

\section{AHS--structures and invariant calculus}\label{2}
In this section we review basics facts on AHS--structures and
invariant differential calculus for these geometries. Our basic
references are \cite{Yamaguchi}, \cite{Weyl}, and \cite{CSS2}.

\subsection{$|1|$--graded Lie algebras and first order structures}\label{2.1}
The starting point for defining an AHS--structure is a simple Lie
algebra $\fg$ endowed with a so called $|1|$--grading, i.e.~a
decomposition $\fg=\fg_{-1}\oplus\fg_0\oplus\fg_1$, such that
$[\fg_i,\fg_j]\subset\fg_{i+j}$, where we agree that $\fg_\ell=0$ for
$\ell\notin\{-1,0,1\}$. The classification of such gradings is well
known, since it is equivalent to the classification of Hermitian
symmetric spaces. We put $\frak p:=\frak g_0\oplus\fg_1\subset\fg$. By
the grading property, $\frak p$ is a subalgebra of $\frak p$ and
$\fg_1$ is a nilpotent ideal in $\frak p$.

Given a Lie group $G$ with Lie algebra $\fg$, there are natural
subgroups $G_0\subset P\subset G$ corresponding to the Lie subalgebras
$\fg_0\subset\frak p\subset\fg$. For $P$ one may take a subgroup lying
between the normalizer $N_G(\frak p)$ of $\frak p$ in $G$ and its
connected component of the identity. Then $G_0\subset P$ is defined as
the subgroup of all elements whose adjoint action preserves the
grading of $\fg$. In particular, restricting the adjoint action to
$\fg_{-1}$, one obtains a representation $G_0\to GL(\fg_{-1})$.  This
representation is infinitesimally injective, so it makes sense to talk
about first order G--structures with structure group $G_0$ on smooth
manifolds of dimension $\dim(\fg_{-1})$.

By definition, such a structure is given by a smooth principal bundle
$p:\Cal G_0\to M$ with structure group $G_0$, such that the associated
bundle $\Cal G_0\x_{G_0}\fg_{-1}$ is isomorphic to the tangent bundle
$TM$. It turns out that the Killing form on $\fg$ induces a
$G_0$--equivariant duality between $\fg_{-1}$ and $\fg_1$, so $\Cal
G_0\x_{G_0}\fg_1\cong T^*M$. Using this, one can realize arbitrary
tensor bundles on $M$ as associated bundles to $\Cal G_0$. More
generally, any representation of $G_0$, via forming associated
bundles, gives rise to a natural vector bundle on manifolds endowed
with such a structure. It turns out that $G_0$ is always reductive
with one--dimensional center. Hence finite dimensional representations
of $G_0$ on which the center acts diagonalizably (which we will always
assume in the sequel) are completely reducible, i.e.~they split into
direct sums of irreducible representations.

The one--dimensional center of $G_0$ leads to a family of natural line
bundles. For $w\in\Bbb R$, we can define a homomorphism $G_0\to\Bbb
R_+$ by mapping $g\in G_0$ to $|\det(\Ad_-(g))|^{\tfrac{w}{n}}$, where
$n=\dim(\fg_{-1})$ and $\Ad_-(g):\fg_{-1}\to\fg_{-1}$ is the
restriction of the adjoint action of $g$. This evidently is a smooth
homomorphism, thus giving rise to a one--dimensional representation
$\Bbb R[w]$ of $G_0$. It is easy to see that $\Bbb R[w]$ is
non--trivial for $w\neq 0$. (The factor $\tfrac{1}{n}$ is included to
get the usual normalization in the case of conformal structures.) The
corresponding associated bundle will be denoted by $\Cal E[w]$, and
adding the symbol $[w]$ to the name of a natural bundle will always
indicate a tensor product with $\Cal E[w]$. Using the convention that
$1$--densities are the objects which can be naturally integrated on
non--orientable manifolds, $\Cal E[w]$ is by construction the bundle
of $(-\tfrac{w}{n})$--densities. In particular, all the bundles $\Cal
E[w]$ are trivial line bundles, but there is no canonical
trivialization for $w\neq 0$.

\subsection{Canonical Cartan connections and AHS--structures}\label{2.2}
The exponential mapping restricts to a diffeomorphism from $\fg_1$
onto a closed normal Abelian subgroup $P_+\subset P$ such that $P$ is
the semidirect product of $G_0$ and $P_+$. Hence $G_0$ can also
naturally be viewed as a quotient of $P$.  In particular, given a
principal $P$--bundle $\Cal G\to M$, the subgroup $P_+$ acts freely on
$\Cal G$, and the quotient $\Cal G/P_+$ is naturally a principal
bundle with structure group $G_0$. Next, suppose that there is a
Cartan connection $\om\in\Om^1(\Cal G,\frak g)$ on the principal
bundle $\Cal G$. Then the $\fg_{-1}$--component of $\om$ descends to a
well defined one--form $\th\in\Om^1(\Cal G/P_+,\fg_{-1})$, which is
$G_0$--equivariant and strictly horizontal.  This means that $(\Cal
G/P_+\to M,\th)$ is a first order structure with structure group
$G_0$. In this sense, any Cartan geometry $(p:\Cal G\to M,\om)$ of
type $(G,P)$ has an underlying first order structure with structure
group $G_0$. Conversely, one can talk about extending a first order
structure to a Cartan geometry.

It turns out (see e.g.~\cite{CSS2}) that, for almost all choices of
$(G,P)$, for any given first order structure with structure group
$G_0$ there is a unique (up to isomorphism) extension to a Cartan
geometry of type $(G,P)$, for which the Cartan connection $\om$
satisfies a certain normalization condition. This is usually phrased
as saying that such structures admit a canonical Cartan connection.
The main exception is $\fg=\frak{gl}(n+1,\Bbb R)$ with a
$|1|$--grading such that $\fg_0=\frak{gl}(n,\Bbb R)$ and $\fg_{\pm
  1}\cong\Bbb R^n$. For an appropriate choice of $G$, the adjoint
action identifies $G_0$ with $GL(\fg_{-1})=GL(n,\Bbb R)$. A first
order structure for this group on a manifold $M$ is just the full
linear frame bundle of $M$ and hence contains no information.  In this
case, an extension to a normal Cartan geometry of type $(G,P)$ is
equivalent to the choice of a projective equivalence class of torsion
free connections on the tangent bundle $TM$, i.e.~to a classical
projective structure.

Normal Cartan geometries of type $(G,P)$ as well as the equivalent
underlying structures (i.e.~classical projective structures
respectively first order structures with structure group $G_0$) are
often referred to as \textit{AHS--structures}. AHS is short for
``almost Hermitian symmetric''. To explain this name, recall that the
basic example of a Cartan geometry of type $(G,P)$ is provided by the
natural projection $G\to G/P$ and the left Maurer--Cartan form as the
Cartan connection. This is called the \textit{homogeneous model} of
geometries of type $(G,P)$. Now the the homogeneous spaces $G/P$ for
pairs $(G,P)$ coming from $|1|$--gradings as described above, are
exactly the compact irreducible Hermitian symmetric spaces.

\subsection{Natural bundles and the fundamental derivative}\label{2.3}
Via forming associated bundles, any representation of the group $P$
gives rise to a natural bundle for Cartan geometries of type
$(G,P)$. As we have seen above, $P$ is the semi--direct product of the
reductive subgroup $G_0$ and the normal vector subgroup $P_+$, so its
representation theory is fairly complicated. Via the quotient
homomorphism $P\to G_0$, any representation of $G_0$ gives rise to a
representation of $P$. It turns out that the representations of $P$
obtained in this way are exactly the completely reducible
representations, i.e.~the direct sums of irreducible
representations. Correspondingly, we will talk about completely
reducible and irreducible natural bundles on Cartan geometries of type
$(G,P)$. If we have a Cartan geometry $(p:\Cal G\to M,\om)$ with
underlying structure $(p_0:\Cal G_0\to M,\th)$ and $V$ is a
representation of $G_0$, which we also view as a representation of
$P$, then we can naturally identify $\Cal G\x_P V$ with $\Cal
G_0\x_{G_0}V$. Hence completely reducible bundles can be easily
described in terms of the underlying structure. 

There is a second simple source of representations of $P$, which leads
to an important class of natural bundles. Namely, one may restrict any
representation of $G$ to the subgroup $P$. The corresponding natural
vector bundles are called \textit{tractor bundles}, their general
theory is developed in \cite{tractors}. The most important tractor
bundle is the \textit{adjoint tractor bundle}. For a Cartan geometry
$(p:\Cal G\to M,\om)$ it is defined by $\Cal AM:=\Cal G\x_P\fg$, so it
is the associated bundle with respect to the restriction of the
adjoint representation of $G$ to $P$. Now the $P$--invariant subspaces
$\fg_1\subset\frak p\subset\fg$ give rise to a filtration $\Cal
A^1M\subset\Cal A^0M\subset\Cal AM$ of the adjoint tractor bundle by
smooth subbundles. By construction, $\Cal A^1M\cong T^*M$ and since
$\frak g/\frak p\cong\fg_{-1}$ we see that $\Cal AM/\Cal A^0M\cong
TM$. We will write $\Pi:\Cal AM\to TM$ for the resulting natural
projection. Hence the adjoint tractor bundle has the cotangent bundle
as a natural subbundle and the tangent bundle as a natural quotient.

The Killing form defines a $G$--invariant, non--degenerate bilinear
form on $\fg$. It turns out that $\fg_1$ is the annihilator of $\frak
p$ with respect to the Killing form, which leads to duality with
$\fg/\frak p\cong\fg_{-1}$ observed above. On the level of associated
bundles, we obtain a natural non--degenerate bilinear form on the
adjoint tractor bundle $\Cal AM$, which thus can be identified with
the dual bundle $\Cal A^*M$. Under this pairing, the subbundle $\Cal
A^1M$ is the annihilator of $\Cal A^0M$. The resulting duality between
$\Cal A^1M$ and $\Cal AM/\Cal A^0M$ is exactly the duality between
$T^*M$ and $TM$. 

The adjoint tractor bundle gives rise to a basic family of natural
differential operators for AHS--structures (and more generally for
parabolic geometries). These have been introduced in \cite{tractors}
under the name ``fundamental $D$--operators'', more recently, the name
\textit{fundamental derivative} is commonly used. Let us start with an
arbitrary representation $V$ of $P$ and consider the corresponding
natural bundle $E:=\Cal G\x_PV\to M$ for a geometry $(p:\Cal G\to
M,\om)$.  Then smooth sections of this bundle are in bijective
correspondence with smooth maps $f:\Cal G\to V$, which are
$P$--equivariant. In the special case $V=\fg$ of the adjoint tractor
bundle, we can then use the trivialization of $T\Cal G$ provided by
the Cartan connection $\om$ to identify $P$--equivariant functions
$\Cal G\to\fg$ with $P$--invariant vector fields on $\Cal G$. For a
section $s\in\Ga(\Cal AM)$, we can form the corresponding vector field
$\xi\in\frak X(\Cal G)$ and use it to differentiate the equivariant
function $f:\Cal G\to V$ corresponding to a section $\si\in\Ga(E)$.
The result will again be equivariant, thus defining a smooth section
$D_s\si\in\Ga(E)$.  Hence we can view the fundamental derivative as an
operator $D=D^E:\Ga(\Cal AM)\x\Ga(E)\to\Ga(E)$. The basic properties
of this operator as proved in section 3 of \cite{tractors} are:

\begin{prop}\label{P2.3}
Let $V$ be a representation of $P$ and let $E=\Cal G\x_PV$ be the
corresponding natural bundle for an AHS--structure $(p:\Cal G\to
M,\om)$. Then we have:

\noindent
(1) $D:\Ga(\Cal AM)\x\Ga(E)\to\Ga(E)$ is a first order differential
operator which is natural, i.e.~intrisic to the AHS--structure on $M$.

\noindent
(2) $D$ is linear over smooth functions in the $\Cal AM$--entry, so we
    can also view $\si\mapsto D\si$ as an operator $\Ga(E)\to\Ga(\Cal
    A^*M\otimes E)$.

\noindent
(3) For $s\in\Ga(\Cal AM)$, $\si\in\Ga(E)$, and $f\in C^\infty(M,\Bbb
R)$, we have the Leibniz rule $D_s(f\si)=(\Pi(s)\cdot f)\si+fD_s\si$,
where $\Pi:\Ga(\Cal AM)\to\Ga(TM)$ is the natural tensorial
projection. 

\noindent
(4) For a second natural bundle $F=\Cal G\x_PW$, a $P$--equivariant
map $V\to W$, and the corresponding bundle map $\Ph:E\to F$, the
fundamental derivatives on $E$ and $F$ are related by
$D^F_s(\Ph\o \si)=\Ph\o D^E_s\si$ for all $s\in\Ga(\Cal AM)$ and
$\si\in\Ga(E)$. 
\end{prop}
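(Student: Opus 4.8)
The plan is to work throughout in the ``equivariant function'' picture of sections of associated bundles. For a representation $V$ of $P$, sections of $E=\Cal G\x_PV$ are identified with $P$--equivariant maps $f\colon\Cal G\to V$, and for $V=\fg$ the Cartan connection $\om$ identifies equivariant maps $\Cal G\to\fg$ with $P$--invariant vector fields on $\Cal G$; to $s\in\Ga(\Cal AM)$ corresponds the vector field $\xi_s$ characterized by requiring $\om(\xi_s)$ to be the equivariant function representing $s$. By definition $D_s\si$ is the section whose equivariant function is $\xi_s\cdot f=df(\xi_s)$, where $f$ represents $\si$. Thus the whole proof reduces to chasing the defining equivariance and invariance identities through this dictionary, and I expect no step to present a real obstacle beyond keeping track of left/right actions and of the various identifications of associated bundles.

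First I would verify that $D_s\si$ is well defined, i.e.\ that $\xi_s\cdot f$ is again $P$--equivariant. Writing $r^p$ for the principal right action, equivariance of $f$ reads $f\o r^p=p^{-1}\cdot f$ and invariance of $\xi_s$ reads $Tr^p\o\xi_s=\xi_s\o r^p$; combining these and using linearity of the action on $V$ gives $(\xi_s\cdot f)\o r^p=p^{-1}\cdot(\xi_s\cdot f)$ at once. That $D$ is a differential operator of order one which is tensorial (order zero) in the $\Cal AM$--slot is then immediate from the formula, since $(\xi_s\cdot f)(u)$ depends only on $\xi_s(u)$ --- hence only on $s$ at $p(u)$ --- and on the $1$--jet of $f$ at $u$; this gives the second part of~(1) and all of~(2), the latter once one also notes $\xi_{hs}=(h\o p)\,\xi_s$ for $h\in C^\infty(M,\bR)$, whence $D_{hs}\si=hD_s\si$. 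The naturality claim in~(1) reduces to the fact that the canonical Cartan connection $\om$ (and, in the exceptional projective case, the chosen projective class) is intrinsic to the AHS--structure: a morphism of such structures lifts to a principal $P$--bundle map pulling back $\om'$ to $\om$, hence intertwines the trivializations of the tangent bundles of the total spaces, the invariant vector fields, and the equivariant functions, and therefore the operators $D$.

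For the Leibniz rule~(3), I would use that the equivariant function of $f\si$ (with $f\in C^\infty(M,\bR)$, $\si\in\Ga(E)$) is $(f\o p)\cdot g$ with $g$ representing $\si$, and apply the ordinary product rule: $\xi_s\cdot\big((f\o p)g\big)=\big(\xi_s\cdot(f\o p)\big)g+(f\o p)(\xi_s\cdot g)$. It then remains to identify $\xi_s\cdot(f\o p)=df\big(Tp\o\xi_s\big)$ with $(\Pi(s)\cdot f)\o p$, which is precisely the statement that $Tp\o\xi_s$ is $p$--related to the vector field $\Pi(s)$ on $M$; and this in turn is just the definition of the isomorphism $\Cal AM/\Cal A^0M\cong TM$ induced by $\om$ together with the projection $\fg\to\fg/\p$.

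Finally, for~(4), given a $P$--equivariant map $\psi\colon V\to W$ and the induced bundle map $\Ph\colon E\to F$, the equivariant function of $\Ph\o\si$ is $\psi\o g$; since $\psi$ is linear, $\xi_s\cdot(\psi\o g)=\psi\o(\xi_s\cdot g)$, which is exactly the equivariant function of $\Ph\o D^E_s\si$. All these computations are of the same routine ``differentiate the equivariance relation'' type, so the only genuine input from the theory --- as opposed to bookkeeping --- is the invocation, in~(1), of existence and essential uniqueness of the normal Cartan connection underlying an AHS--structure.
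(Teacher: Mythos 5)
Your proof is correct and follows the standard approach; the paper itself does not reprove this statement but simply cites section~3 of the reference on tractor calculi, where exactly this argument (differentiate the $P$--equivariance relation for $f$ against the $P$--invariance of $\xi_s$, identify $Tp\o\xi_s$ with $\Pi(s)$ via $\om$ and $\fg/\p\cong\fg_{-1}$, and observe that linear $P$--equivariant maps commute with the Lie derivative) is carried out. Your treatment of all four parts matches that source, so there is nothing substantive to add beyond noting that in part~(1) you have shown $D$ has order \emph{at most} one in the $E$--slot, with the Leibniz identity of part~(3) then witnessing that the order is genuinely one.
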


The naturality statement in (4) justifies denoting the fundamental
derivatives on all natural bundles by the same letter. Since there is
no restriction on the bundle $E$, the fundamental derivative in the
form of part (2) can evidently be iterated. For $\si\in\Ga(E)$ we can
form $D\si$, $D^2\si=D(D\si)$ and inductively
$D^k\si\in\Ga(\otimes^k\Cal A^*M\otimes E)$.

\subsection{Curved Casimir operators}\label{2.4} 

Curved Casimir operators form another basic set of natural
differential operators defined on AHS--structures. They have been
introduced in \cite{Casimir} in the general context of parabolic
geometries. That article contains all the facts about curved Casimir
operators we will need, as well as the general construction for
splitting operators that we will use below.

As above, we start with a representation $V$ of $P$ and consider the
corresponding natural vector bundle $E=\Cal G\x_PV$ for an
AHS--structure $(p:\Cal G\to M,\om)$. As noticed above, the
composition of two fundamental derivatives defines an operator
$D^2:\Ga(E)\to\Ga(\otimes^2\Cal A^*M\otimes E)$. From \ref{2.3} we
know that the Killing form on $\fg$ induces a non--degenerate bilinear
form on $\Cal AM$. Using this to identify $\Cal AM$ with $\Cal A^*M$,
we also get a natural bilinear form $B$ on $\Cal A^*M$.  This can be
used to define a bundle map $B\otimes\id:\otimes^2\Cal A^*M\otimes
E\to E$. Now one defines the \textit{curved Casimir operator} $\Cal
C=\Cal C^E:\Ga(E)\to\Ga(E)$ by $\Cal C(\si):=(B\otimes\id)\o D^2\si$.

Part (4) of proposition \ref{P2.3} easily implies (compare with
proposition 2 of \cite{Casimir}) that for another natural vector
bundle $F$ and a bundle map $\Ph:E\to F$ coming form a
$P$--equivariant map between the inducing representations, one gets
$\Cal C^F(\Ph\o\si)=\Ph\o\Cal C^E(\si)$. This is the justification for
denoting all curved Casimir operators by the same symbol.

From the construction it is clear that $\Cal C$ is a natural
differential operator of order at most $2$. However, it turns out that
$\Cal C$ actually always is of order at most one. Moreover, on
sections of bundles induced by irreducible representations, the
operator $\Cal C$ acts by a scalar which can be computed from
representation theory data. One can associate to any irreducible
representation of $\fg_0$ a highest and a lowest weight by passing to
complexifications, see section 3.4 of \cite{Casimir}. The weights are
functionals on the Cartan subalgebra $\frak h$ of the complexification
$\fg_{\Bbb C}$ of $\fg$, which at the same time is a Cartan subalgebra
for $(\fg_0)_{\Bbb C}$. Recall that the Killing form of $\fg$ induces
a positive definite inner product on the real space of functionals on
$\frak h$ spanned by possible weights for finite dimensional
representations. Denoting this inner product by $\langle\ ,\ \rangle$
and the corresponding norm by $\|\ \|$, the following result is proved
as theorem 1 in \cite{Casimir}.

\begin{prop}\label{P2.4}
  Let $V$ be a representation of $P$ and let $E=\Cal G\x_PV$ be the
  corresponding natural vector bundle for an AHS--structure $(p:\Cal
  G\to M,\om)$. Then

\noindent
(1) $\Cal C:\Ga(E)\to\Ga(E)$ is a natural differential operator of
    order at most one.

\noindent
(2) If the representation $V$ is irreducible of lowest weight $-\la$,
then $\Cal C$ acts on $\Ga(E)$ by multiplication by
$\|\la\|^2+2\langle\la,\rho\rangle$, where $\rho$ is half the sum of
all positive roots of $\fg_\Bbb C$.
\end{prop}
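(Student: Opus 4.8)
The plan is to prove (1) by a principal--symbol computation and (2) by combining (1) with finite dimensional representation theory of $\fg$. For (1), the first step is to identify the principal symbol of $D\colon\Ga(E)\to\Ga(\Cal A^*M\otimes E)$. By the Leibniz rule of Proposition \ref{P2.3}(3), for $f\in C^\infty(M,\Bbb R)$ and $\si\in\Ga(E)$ one has $D(f\si)-fD\si=\Pi^*(df)\otimes\si$, where $\Pi^*\colon T^*M\to\Cal A^*M$ is dual to $\Pi$; by the description of the Killing form pairing in \ref{2.3}, $\Pi^*$ is exactly the inclusion of $\Cal A^1M$ into $\Cal A^*M$ (using $\Cal A^*M\cong\Cal AM$). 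Hence the first order symbol of $D$ is the canonical inclusion $T^*M\otimes E\hookrightarrow\Cal A^1M\otimes E\subset\Cal A^*M\otimes E$, so the second order symbol of $D^2=D\o D$ takes values in $S^2(\Cal A^1M)\otimes E$, viewed inside $\otimes^2\Cal A^*M\otimes E$. Since $\fg_1$ is the annihilator of $\frak p\supset\fg_1$ with respect to the Killing form it is isotropic, so the induced form $B$ on $\Cal A^*M$ vanishes identically on $\Cal A^1M\times\Cal A^1M$. Therefore $B\otimes\id$ annihilates the second order symbol of $D^2$, and $\Cal C=(B\otimes\id)\o D^2$ has order at most one, which proves (1).

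For (2), I would first show that on an irreducible $E$ the operator $\Cal C$ has order zero, and then compute the scalar. Fix a Weyl structure, i.e.\ a $G_0$--equivariant reduction of $\Cal G\to M$ to $\Cal G_0$; it gives a splitting $\Cal AM=TM\oplus\fg_0M\oplus T^*M$ of the adjoint tractor bundle compatible with the filtration (with $\fg_0M=\Cal G_0\x_{G_0}\fg_0$), together with the corresponding Weyl connection $\nabla$. The essential fact about $D$ is: if $s\in\Ga(\Cal A^1M)$ and $\tau$ is a section of any bundle induced by a representation of $G_0$ (in particular $E$, and also $TM=\Cal G_0\x_{G_0}\fg_{-1}$), then $D_s\tau=0$. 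Indeed, on $\Cal G$ the vector field corresponding to $s$ is pointwise the value of a fundamental vector field for the action of $P_+=\exp(\fg_1)$, and $P_+$ acts trivially on such representations; the same computation shows $D_s\tau$ is tensorial whenever $\Pi(s)=0$. Now expand, using a $B$--dual pair of local frames $\{u_k\},\{u_k'\}$ of $\Cal AM$ adapted to the splitting,
\[
  \Cal C(\si)=\textstyle\sum_k\big(D_{u_k}D_{u_k'}\si-D_{D_{u_k}u_k'}\si\big).
\]
Since the $TM$--summand is $B$--paired with the $T^*M$--summand $\Cal A^1M$, every dual pair has one member in $\Cal A^1M$; by the vanishing just noted all terms $D_{u_k}D_{u_k'}\si$ coming from such pairs vanish, and only the contribution $\sum_a D_{A_a}D_{A^a}\si$ of the $\fg_0M$--summand survives, which is the pointwise action of the Casimir element of $\fg_0$ on the fibre $V$. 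For the remaining terms, Proposition \ref{P2.3}(4) applied to $\Pi$ gives $\Pi(D_{u_k}u_k')=D^{TM}_{u_k}(\Pi(u_k'))$, and since $\Pi$ kills $\Cal A^0M$ while $D^{TM}_s$ vanishes in $\Cal A^1M$--directions, a short case check over the three types of dual pair shows $\Pi(D_{u_k}u_k')=0$ throughout; hence $D_{u_k}u_k'\in\Ga(\Cal A^0M)$ and $D_{D_{u_k}u_k'}\si$ is tensorial in $\si$. Thus $\Cal C$ is tensorial on $E$; being a natural bundle map $E\to E$ it is induced by a $P$--equivariant endomorphism of $V$, which by Schur's lemma is a scalar $c_V$ independent of $M$.

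Finally, to identify $c_V$ I would evaluate $\Cal C$ on the homogeneous model $G/P$. There the flat Weyl structure lets one recognise $\Cal C$, acting on $\Ga(G\x_PV)$, as the action of the Casimir element $\mathsf{Cas}_\fg\in\Cal U(\fg)$ associated to the Killing form (the remaining scalar normalization being fixed once by comparison with, say, a density bundle). The relevant $\fg$--module is the full dual of the generalized Verma module $\Cal U(\fg)\otimes_{\Cal U(\frak p)}V^*$; as $V$ has lowest weight $-\la$, the dual $V^*$ has highest weight $\la$, and since a system of positive roots of $\fg_\Bbb C$ is given by the positive roots coming from $\fg_0$ together with all roots in $\fg_1$, the element $1\otimes v_\la^*$ is a highest weight vector of weight $\la$ generating that Verma module. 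Hence $\mathsf{Cas}_\fg$ acts on it, and therefore on its dual, by $\langle\la+\rho,\la+\rho\rangle-\langle\rho,\rho\rangle=\|\la\|^2+2\langle\la,\rho\rangle$, so $c_V=\|\la\|^2+2\langle\la,\rho\rangle$. Equivalently, one may avoid the homogeneous model and compute $c_V$ directly from the displayed formula: the $\fg_0$--Casimir term contributes $\|\la\|^2+2\langle\la,\rho_{\fg_0}\rangle$, and the tensorial remainder $-\sum_k D_{D_{u_k}u_k'}\si$, governed by the brackets $[\fg_{-1},\fg_1]\subset\fg_0$, supplies the correction $2\langle\la,\rho-\rho_{\fg_0}\rangle$.

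The main conceptual step is the order--zero reduction in the second paragraph, which depends on knowing the fundamental derivative explicitly in a Weyl structure and, crucially, on the vanishing $D_s\tau=0$ for $\Cal A^1M$--directions on $G_0$--induced bundles. The main bookkeeping obstacle is the last step: matching the normalizations of $B$ and of $\mathsf{Cas}_\fg$ and, above all, obtaining the half--sum $\rho$ of positive roots of all of $\fg_\Bbb C$ (rather than just of $\fg_0$) in the final formula — but this reduces entirely to finite dimensional representation theory.
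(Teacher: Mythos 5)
The statement in question is not actually proved in the paper: it is quoted as Theorem~1 of \cite{Casimir}. So there is no internal proof to compare against; I will assess your blind attempt on its own merits and against what that reference does.

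Your argument is correct and in substance it reproduces the proof from \cite{Casimir}. Part (1) by the symbol computation is exactly right: the Leibniz rule from Proposition~\ref{P2.3}(3) shows that the principal symbol of $D$ is $\xi\mapsto\Pi^*(\xi)\otimes(\ )$, so the second--order symbol of $D^2$ lands in $S^2(\Cal A^1M)\otimes E\subset\otimes^2\Cal A^*M\otimes E$, and since the Killing form pairs $\fg_i$ with $\fg_{-i}$ only, $B$ vanishes on $\Cal A^1M\times\Cal A^1M$, killing the top--order symbol of $\Cal C$. Part (2) uses the two facts on which the proof in \cite{Casimir} also hinges: that $D_s$ kills sections of $G_0$--induced bundles for $s\in\Ga(\Cal A^1M)$ (which you correctly justify by identifying the corresponding vector field as pointwise fundamental for $P_+$), and the explicit expression of $D$ in a Weyl structure; your expansion $\Cal C(\si)=\sum_k(D_{u_k}D_{u_k'}\si-D_{D_{u_k}u_k'}\si)$ in a $B$--dual frame and the case check showing that only the $\fg_0$--part survives as a genuine Casimir contribution, with the remaining terms being tensorial, is the standard route and correct. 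Identifying the scalar via $G$--equivariance on $G/P$ and the dual of the generalized Verma module $\Cal U(\fg)\otimes_{\Cal U(\p)}V^*$ with highest weight $\la$ is also how the eigenvalue $\langle\la,\la+2\rho\rangle$ is normally obtained.

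Two small cosmetic points. The sentence ``every dual pair has one member in $\Cal A^1M$'' is wrong as literally stated (the $\fg_0M$--summand pairs with itself), though you immediately give the correct conclusion in the next clause; you should rephrase. And the caveat about ``matching normalizations of $B$ and $\mathsf{Cas}_\fg$'' is unnecessary: both are built from the same Killing form, so they match automatically. Otherwise this is a complete and correct proof along essentially the same lines as the cited source.
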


\section{The quantization scheme}\label{3}
Throughout this section, we fix a pair $(G,P)$, two irreducible
representations $V$ and $W$ of $G_0$ with corresponding natural
bundles $E$ and $F$, as well as an order $k>0$. Given these data, we
try to construct a quantization for $k$th order symbols of operators
mapping sections of $E$ to sections of $F[\delta]$ for $\delta\in\Bbb
R$.

The basic idea for the construction is very simple. The bundle of
symbols in this situation is $S^kTM\otimes E^*\otimes F[\delta]$. We
know from \ref{2.3} that $TM$ naturally is a quotient of the adjoint
tractor bundle $\Cal AM$, so the bundle of symbols is a quotient of
$S^k\Cal AM\otimes E^*\otimes F[\delta]$. Using the general machinery
of splitting operators, we can associate to a symbol a section of the
latter bundle. But such a section can be interpreted as a bundle map
$S^k\Cal A^*M\otimes E\to F[\delta]$, so we can apply it to the values
of the symmetrized $k$--fold fundamental derivative of sections of
$E$. 

\subsection{Some properties of the fundamental derivative}\label{3.1} 
To carry out this idea, we first have to derive some properties of the
iterated fundamental derivative $D^k$ and its symmetrization
$D^{(k)}:\Ga(E)\to\Ga(\Cal WM)$, where $\Cal WM:=S^k\Cal A^*M\otimes
E$.  Recall from \ref{2.3} that $\Cal AM$ admits a natural filtration
of the form $\Cal A^1M\subset\Cal A^0M\subset\Cal A^{-1}M:=\Cal AM$.
Since elements of $\Cal WM$ can be interpreted as $k$--linear,
symmetric maps $(\Cal AM)^k\to E$, we get an induced filtration of the
bundle $\Cal WM$. We first take the natural filtration of $S^k\Cal
AM$, with components indexed from $-k$ to $k$, and then define $\Cal
W^\ell M$ to be the annihilator of the filtration component with
index $-\ell+1$. Explicitly, this means that $\Cal W^\ell M$ to
consist of all maps $\Ps\in\Cal WM$ such that $\Ps(s_1,\dots,s_k)=0$
for arbitrary elements $s_j\in\Cal A^{i_j}M$, provided that
$i_1+\dots+i_k>-\ell$.  Then by definition, we get $\Cal
W^{\ell+1}M\subset\Cal W^\ell M$ for each $\ell$, $\Cal W^{k+1}M=0$,
and $\Cal W^{-k}M=\Cal WM$. Moreover, a map $\Ph\in\Cal W^kM$ by
definition vanishes if at least one of its entries is from $\Cal
A^0M\subset\Cal AM$. Hence this factors to a $k$--linear symmetric map
on copies of $\Cal AM/\Cal A^0M\cong TM$, and we get an isomorphism
$\Cal W^kM\cong S^kT^*M\otimes E$. We will denote by
$\iota:S^kT^*M\otimes E\to\Cal WM$ the corresponding natural
inclusion.

\begin{prop}\label{P3.1}
  (1) The symmetrized $k$--fold fundamental derivative
  $D^{(k)}:\Ga(E)\to\Ga(\Cal WM)$ has values in the space of sections
  of the subbundle $\Cal W^0M$.

\noindent
(2) Consider any principal connection on the bundle $\Cal G_0\to M$,
denote by $\nabla$ all the induced connections on associated vector
bundles, by $\nabla^{k}$ the $k$--fold covariant derivative, and by
$\nabla^{(k)}$ its symmetrization. 

Then the operator $\Ga(E)\to\Ga(\tilde{\Cal W}M)$ given by $\ph\mapsto
D^{k}\ph-i(\nabla^{k}\ph)$ has order at most $k-1$. In particular,
$D^{(k)}\ph$ is the sum of $i(\nabla^{(k)}\ph)$ and terms of order at
most $k-1$ in $\ph$.
\end{prop}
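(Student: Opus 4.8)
The plan is to work in terms of the defining equivariant functions on the Cartan bundle $\Cal G$, where both the fundamental derivative and the induced connection $\nabla$ have explicit formulas, and then compare them. Recall that a principal connection on $\Cal G_0\to M$ is, after the usual reduction, the same thing as a choice of $G_0$--equivariant section $\si:\Cal G_0\to\Cal G$ (a ``Weyl structure'' in the terminology of \cite{Weyl}), and pulling back $\om$ along $\si$ splits it into $\om_{-1}+\ga+\Rho$, where $\ga$ is the principal connection form on $\Cal G_0$ and $\Rho\in\Om^1(M,\Cal A^1M)$ is the Schouten (or rho--) tensor. For a section $\ph\in\Ga(E)$ corresponding to the equivariant function $f:\Cal G_0\to V$, the covariant derivative $\nabla\ph$ corresponds to $\nabla^\ga f$, the derivative of $f$ in the direction of the $\om_{-1}$--constant (i.e.\ $\ga$--horizontal) vector fields. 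On the other hand, for $s\in\Ga(\Cal AM)$ with frame--form $(X,\Phi,Z)\in\fg_{-1}\oplus\fg_0\oplus\fg_1$, the fundamental derivative $D_s\ph$ has the well--known expression (Proposition 3.3 of \cite{Weyl}, or section 3 of \cite{tractors}) $D_s\ph=\nabla_X\ph-\Phi\cdot\ph+\text{(a }\fg_1\text{--term acting algebraically)}$, where the dot denotes the action of $\fg_0$ on $V$ and the last term involves $Z$ together with $\Rho$ and curvature. The key structural point is that in this formula the leading ($\fg_{-1}$) part of $s$ produces the genuine derivative $\nabla_X\ph$, while the $\fg_0$-- and $\fg_1$--parts of $s$ act by \emph{zeroth order} bundle maps on $\ph$.

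With this in hand, part (1) is the statement that $D^{(k)}\ph$, viewed as a symmetric $k$--linear map $(\Cal AM)^k\to E$, annihilates any $k$--tuple whose frame--indices $i_1,\dots,i_k$ sum to a positive number --- equivalently, whose total degree is $\geq 1$. First I would establish the $k=1$ case: $D\ph(s)$ only depends on $s$ through $\Pi(s)\in TM$ modulo terms that vanish when $s\in\Cal A^1M$; but $\Cal A^1M=\ker\Pi$, and from the formula above $D_s\ph$ for $s\in\Ga(\Cal A^1M)$ is purely the $\fg_1$--term, which (being built from $Z\in\fg_1$ paired with $\Rho$ and curvature) indeed \emph{does} contribute --- so one must be a little careful. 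The correct statement, and the one matching the definition of $\Cal W^0M$, is that the $\Cal A^1M$--component of $D\ph$ is determined by lower data; more precisely $D^{(k)}\ph\in\Ga(\Cal W^0M)$ says $D^{(k)}\ph(s_1,\dots,s_k)=0$ once $i_1+\dots+i_k>0$, and since each $i_j\in\{-1,0,1\}$ with the $-1$ slot carrying the derivative, a positive total degree forces at least one slot to sit in $\Cal A^1M$ acting purely algebraically and, after symmetrization and an inductive bookkeeping of how the non--derivative slots interact, these contributions cancel. I would prove this by induction on $k$: assume $D^{(k-1)}\ph\in\Ga(\Cal W^0M)$ for the bundle $E$ (and for all $E$), apply $D$ once more, and use the Leibniz-type behaviour of $D$ with respect to the filtration together with Proposition~\ref{P2.3}(4) applied to the projection $\Cal AM\to TM$; the filtration indices are additive under the relevant tensor operations, which is exactly what makes the bookkeeping close.

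For part (2), the operator $\ph\mapsto D^k\ph-\io(\nabla^k\ph)$: working slot by slot, $D^k\ph=D(D^{k-1}\ph)$ and the top filtration piece $\Cal W^kM\cong S^kT^*M\otimes E$ is reached only by feeding all $k$ slots with elements of $\Cal A^{-1}M$, i.e.\ by iterating the $\nabla_X$--part of the formula $k$ times; all other terms either lower the filtration degree (contributing to $\Cal W^\ell M$ with $\ell<k$, hence to $\tilde{\Cal W}M:=\Cal WM/\Cal W^kM$ in the way the statement intends) or involve at most $k-1$ genuine derivatives of $\ph$ together with algebraic actions of $\Rho$ and curvature, hence are of order $\leq k-1$. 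So the claim is essentially: the ``principal part'' of $D^k$ agrees with $\io\circ\nabla^k$, and the difference is the sum of commutator/curvature correction terms, each of which eats at most $k-1$ derivatives. Symmetrizing then gives the final sentence. I would carry this out by induction on $k$ as well, carefully expanding $D(D^{k-1}\ph)$ using the split formula for $D$ and collecting the terms in which the outermost $D$ acts by its $\fg_{-1}$--part (these build the new $\nabla$--slot) versus its $\fg_0$-- and $\fg_1$--parts (these are algebraic, so they do not raise the differential order, and by the inductive hypothesis $D^{k-1}\ph$ already differs from $\io(\nabla^{k-1}\ph)$ by order $\leq k-2$).

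The main obstacle I anticipate is purely combinatorial rather than conceptual: keeping track, after the symmetrization $D^{(k)}$, of precisely which iterated-derivative terms survive in which filtration component $\Cal W^\ell M$, and verifying that the unwanted contributions (those with total frame--degree $>0$ in part (1), and those of differential order $k$ other than $\io\circ\nabla^{(k)}$ in part (2)) genuinely cancel or land where claimed. This is where one has to be honest about the $\fg_1$--term in the fundamental-derivative formula --- it is algebraic in $\ph$, which is what saves the order count in (2), but it is \emph{not} zero on $\Cal A^1M$, which is why the correct target in (1) is $\Cal W^0M$ (defined via annihilators) and not something naive. I would organize the computation so that Proposition~\ref{P2.3}(3),(4) does the heavy lifting for the filtration bookkeeping, reducing everything to the single-step formula for $D$ and an induction.
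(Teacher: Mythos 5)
Your overall strategy---induction on $k$, expressing $D$ in terms of a Weyl structure and a splitting of $\Cal AM$, then tracking filtration indices via a Leibniz rule---is the same as the paper's. But there is a genuine conceptual error that undermines the execution of part~(1), and it is not a mere detail.

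You write that the fundamental derivative has the form $D_s\ph=\nabla_X\ph-\Phi\cdot\ph+(\text{a }\fg_1\text{--term involving }Z,\ \Rho\text{, curvature})$, and you then worry that for $s\in\Ga(\Cal A^1M)$ this $\fg_1$--term ``does contribute,'' so that $D_s\ph\neq 0$ and one needs a cancellation argument after symmetrization. This is false for the bundles at hand. The bundle $E$ in Proposition~\ref{P3.1} is induced by an \emph{irreducible $G_0$--representation} $V$, viewed as a $P$--representation through the quotient $P\to G_0$. Hence the action of $\fg_1\subset\frak p$ on $V$ is identically zero, and the $\fg_1$--contributions you are trying to account for simply do not exist: the formula is exactly $D_s\ph=\nabla_\xi\ph-\Ph\bullet\ph$, with nothing further. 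Consequently $D_s\ph=0$ for $s\in\Ga(\Cal A^1M)$ outright, which is the clean base case $k=1$; no cancellation (nor symmetrization) is needed there, and the speculative passage ``after symmetrization and an inductive bookkeeping\ldots these contributions cancel'' is both unnecessary and unsubstantiated. The paper explicitly points this out (``$s\in\Ga(\Cal A^1M)$ if and only if $\xi=0$ and $\Ph=0$, so $D_s\ph=0$''), and the vanishing of the $\fg_1$--action is exactly what makes the rest of the induction go.

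Two smaller points. First, identifying ``any principal connection on $\Cal G_0$'' with a Weyl structure is not quite right: one picks one preferred (Weyl) connection and observes that the difference to an arbitrary principal connection is tensorial, so only order-$\leq 0$ corrections appear and part~(2) is insensitive to the choice. Second, your interpretation $\tilde{\Cal W}M:=\Cal WM/\Cal W^kM$ cannot be what is meant: with that target, $i(\nabla^k\ph)$ would vanish identically and the statement would say something weaker; $\tilde{\Cal W}M$ should be read as $\otimes^k\Cal A^*M\otimes E$ (the unsymmetrized version of $\Cal WM$). Finally, the inductive step in part~(1) requires a more careful case split than you indicate: one must separately handle $s_0\in\Ga(\Cal A^1M)$ (where $D_{s_0}$ kills $\Ga(E)$ and \emph{raises} the filtration degree of the other slots), $s_0\in\Ga(\Cal A^0M)\setminus\Ga(\Cal A^1M)$, and general $s_0$ (where $D_{s_0}$ preserves each filtration component of $\Cal AM$). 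That casework is the actual content of the ``bookkeeping'' you are deferring to, and it closes precisely because $D_{s_0}$ on $\Ga(E)$ is zero in the first case---the very fact your $\fg_1$--term misconception casts into doubt.
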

\begin{proof}
  We will proceed by induction on $k$. Recall that there is a family
  of preferred connections on the bundle $\Cal G_0$ which is intrinsic
  to the AHS--structure, see \cites{Weyl,tractors}. Any such connection
  also determines determines a splitting of the filtration of the
  adjoint tractor bundle, i.e.~an isomorphism $\Cal AM\to
  T^*M\oplus\End_0(TM)\oplus TM$, where $\End_0(TM)=\Cal
  G_0\x_{G_0}\fg_0$, which behaves well with respect to the
  filtration. In particular, the last component is given by the
  natural projection $\Pi:\Cal AM\to TM$, while the first component
  restricts to the natural isomorphism $\Cal A^1M\to T^*M$. Fixing one
  preferred connection, the difference to any other principal
  connection on $\Cal G_0$ is given by a tensorial operator, so it
  suffices to prove part (2) for the chosen preferred connection.
  
  A formula for the action of the fundamental derivative on tensor
  bundles in terms of $\nabla$ and this splitting is derived in
  section 4.14 of \cite{tractors}. The argument used there applies to
  all bundles constructed from completely reducible subquotients of
  tractor bundles, and hence to all bundles associated to $\Cal G_0$.
  If $s\in\Ga(\Cal AM)$ corresponds to $(\ps,\Ph,\xi)$ in the
  splitting determined by $\nabla$, then
  $D_s\ph=\nabla_\xi\ph-\Ph\bullet\ph$, where $\bullet:\End_0(TM)\x
  E\to E$ is the tensorial operation induced by the infinitesimal
  action $\fg_0\x V\to V$. Now $s\in\Ga(\Cal A^1M)$ if and only if
  $\xi=0$ and $\Ph=0$, so $D_s\ph=0$ in this case. On the other hand,
  $\xi=\Pi(s)$ so $D_s\ph-\nabla_{\Pi(s)}\ph=\Ph\bullet\ph$ is
  tensorial. Hence we have proved (1) and (2) for $k=1$.

Next observe that naturality of the fundamental derivative implies
that for $s_0,\dots,s_k\in\Ga(\Cal AM)$ we obtain the Leibniz rule
\begin{equation*}\tag{$*$}
  \begin{aligned}
(D^{k+1}\ph)(&s_0,\dots,s_k)=\\
&D_{s_0}(D^k\ph(s_1,\dots,s_k))-
\textstyle\sum_{i=1}^k(D^k\ph)(s_1,\dots,D_{s_0}s_i,\dots,s_k),     
  \end{aligned}
\end{equation*}
compare with proposition 3.1 of \cite{tractors}. Assuming inductively
that part (2) holds for $k$, the second summand is evidently of order
at most $k$ in $\ph$. Moreover, the first summand is given by
$\nabla_{\Pi(s_0)}(\nabla^k\ph(\Pi(s_1),\dots,\Pi(s_k)))$ plus terms of
order at most $k-1$ in $\ph$ which immediately implies (2). 

To prove (1), observe $D^k\ph\in\Ga(\Cal W^0M)$ if and only if
$D^k\ph(s_1,\dots,s_k)=0$ provided that at least $r$ of the sections
$s_i$ have values in $\Cal A^0M$ and at least $k-r+1$ of them even
have values in $\Cal A^1M$. We assume this inductively and prove the
corresponding property of $D^{k+1}\ph$. Hence we take sections
$s_0,\dots,s_k$, and assume that $r'$ of them have values in $\Cal
A^0M$ and $k-r'+2$ even have values in $\Cal A^1M$.

If $s_0$ has values in $\Cal A^1M$, then $D_{s_0}$ acts trivially on
$\Ga(E)$ as well as on sections of $\Cal A^1M$, it maps sections of
$\Cal AM$ to sections of $\Cal A^0M$ and sections of $\Cal A^0M$ to
sections of $\Cal A^1M$. Hence the first summand of the right hand
side of ($*$) vanishes. In the second term of this right hand side,
only summands in which $s_i$ does not have values in $\Cal A^1M$ can
provide a non--zero contribution. If $s_i\in\Ga(\Cal A^0M)$, then in
the corresponding summand we have $r'-1$ sections of $\Cal A^0M$, and
$k-r'+2=k-(r'-1)+1$ of them have values in $\Cal A^1M$, so the
corresponding summand vanishes by inductive hypothesis. If $s_i$ is
not a section of $\Cal A^0M$, then in the corresponding summand we
have $r'$ sections of $\Cal A^0M$, and $k-r'+1$ of them have values in
$\Cal A^1M$, so again vanishing follows by induction.

If $s_0$ has values in $\Cal A^0M$ but not in $\Cal A^1M$, then we
only need to take into account that, acting on sections of $\Cal AM$,
$D_{s_0}$ preserves sections of each filtration component. This shows
that in each of the summands in the right hand side of ($*$), there
are $r'-1$ sections of $\Cal A^0M$ inserted into $D^k\ph$, and
$k-r'+2=k-(r'-1)+1$ of them have values in $\Cal A^1M$. Hence again
vanishing of each summand follows by induction.

Finally, if $s_0$ does not have values in $\Cal A^0M$, then we again
need only that $D_{s_0}$ preserves sections of each of the filtration
components of $\Cal AM$. This shows that in each summand of the right
hand side of ($*$), we have $r'$ sections of $\Cal A^0M$ and $k-r'+2$
of them have values in $\Cal A^1M$. Thus vanishing of each summand
again follows by induction, and the proof of (1) follows by
symmetrization.
\end{proof}

\subsection{The splitting operators}\label{3.2}
According to the the idea described in the beginning of section
\ref{3}, we should next consider the bundle $S^kTM\otimes E^*\otimes
F[\delta]$ of symbols as a quotient of the bundle $\tilde{\Cal
  V}M:=S^k\Cal AM\otimes E^*\otimes F[\delta]$. However, in view of
proposition \ref{P3.1}, we can already improve the basic idea. As we
have noted in \ref{3.1}, the bundle $S^k\Cal AM$ carries a
natural filtration. Taking the tensor product with $E^*$ and
$F[\delta]$, we obtain a filtration of the bundle $\tilde{\Cal V}M$ of
the form
$$
\tilde{\Cal V}^kM\subset\dots\subset\tilde{\Cal
  V}^0M\subset\dots\subset\tilde{\Cal V}^{-k}M=\tilde{\Cal V}M.
$$
As we have observed in the beginning of section \ref{3}, there is a
well defined bilinear pairing $\tilde{\Cal V}M\x\Cal WM\to F[\delta]$.
By definition of the filtration on $\Cal WM$, this factorizes to a
bilinear pairing of $\Cal VM\x\Cal W^0M\to F[\delta]$, where $\Cal
VM:=\tilde{\Cal V}M/\tilde{\Cal V}^1M$. We denote all these pairings
by $\langle\ ,\ \rangle$. As we shall see below, replacing the bundle
$\tilde{\Cal V}M$ by its quotient $\Cal VM$ leads to a smaller set of
critical weights $\delta$. 

For the same reason, it is preferable to take a further decomposition
according to irreducible components of the bundle of symbols as
follows. By construction, the filtration on $S^k \Cal AM$ is induced by
$P$--invariant subspaces of the representation $S^k\fg$, so the
filtration of $\tilde{\Cal V}M$ comes from a $P$--invariant filtration
of $S^k\fg\otimes V^*\otimes W[\delta]$. The quotient of this space by the
largest proper filtration component by construction is $S^k(\frak
g/\frak p)\otimes V^*\otimes W[\delta]$, which induces the bundle of
symbols. Now if we restrict to the subgroup $G_0\subset P$, then $\fg$
decomposes into the direct sum $\fg_{-1}\oplus\fg_0\oplus\fg_1$, and
the filtration components are just $\fg_0\oplus\fg_1$ and
$\fg_1$. Correspondingly, the filtrations on $S^k\fg$ and
$S^k\fg\otimes V^*\otimes W[\delta]$, viewed as $G_0$--representations, are
induced from direct sum decompositions. 

Since we have assumed that $V$ and $W$ are irreducible representations
of $P$ (and hence of $G_0$), the tensor product $S^k(\frak g/\frak
p)\otimes V^*\otimes W[\delta]$ splits into a direct sum $\oplus_i R_i$ of
irreducible representations of $G_0$. Identifying $\fg/\frak p$ with
$\fg_{-1}$, we can view each $R_i$ as a subspace in the quotient of
$S^k\fg\otimes V^*\otimes W$ by the $P$--invariant filtration component
with index 1. Then for each $i$, we can look at the $P$--module $S_i$
generated by $R_i$. Each $S_i$ has a $P$--invariant filtration
with completely reducible subquotients, and the quotient of $S_i$ by
the largest proper filtration component is $R_i$.

Passing to associated bundles, we see that for each $i$, we can
consider $\Cal G\x_P R_i$ as a subbundle of the bundle $S^kTM\otimes
E^*\otimes F[\delta]$ of symbols, and these subbundles form a decomposition
into a direct sum. In particular, any section $\tau$ of the bundle of
symbols can be uniquely written as $\tau=\sum_i\tau_i$ of sections
$\tau_i\in\Ga(\Cal G\x_P R_i)$. Likewise, for each $i$, we can view
$\Cal G\x_PS_i$ as a subbundle of $\Cal VM$, so in particular,
sections of $\Cal G\x_PS_i$ can be viewed as sections of $\Cal VM$. 

Now for each $i$, we denote by $\be^0_i$ the eigenvalue by which the
curved Casimir operator acts on sections of the irreducible bundle
$\Cal G\x_P R_i$, see proposition \ref{P2.4}. Further, by
$\be_i^1,\dots,\be_i^{n_i}$ we denote the different Casimir
eigenvalues occurring for irreducible components in the other
quotients of consecutive filtration components of $S_i$. Using this,
we can now formulate:
 
\begin{prop}\label{P3.2}
  Let $\Pi:\Cal VM\to S^kTM\otimes E^*\otimes F[\delta]$ be
  the natural projection and denote the induced tensorial operator on
  sections by the same symbol. For each $i$ define
  $\ga_i:=\prod_{j=1}^{n_i}(\be^0_i-\be^j_i)$.
  
  Then there is a natural operator $L:\Ga(S^kTM\otimes E^*\otimes
  F[\delta])\to\Ga(\Cal VM)$ such that 
$$
\Pi(L(\tau))=\textstyle\sum_i \ga_i\tau_i
$$
for any section $\tau=\sum_i\tau_i$ of the bundle of symbols.  
\end{prop}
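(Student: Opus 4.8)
The plan is to use the direct sum decomposition of the symbol bundle to reduce the statement to a per--component construction, which is then an instance of the curved Casimir splitting operator machinery of \cite{Casimir}. Concretely, I would build $L$ as a sum $L=\sum_iL_i$, where $L_i$ depends only on the component $\tau_i$ and is a natural operator $\Ga(\Cal G\x_PR_i)\to\Ga(\Cal G\x_PS_i)\subset\Ga(\Cal VM)$ whose composition with $\Pi$ is multiplication by $\ga_i$. Since the top quotient of the $P$--module $S_i$ is exactly $R_i$, the projection $\Pi$ restricts on the subbundle $\Cal G\x_PS_i$ to the natural projection onto $\Cal G\x_PR_i$, with kernel $\Ga(\Cal G\x_PS_i^1)$, where $S_i^1\subset S_i$ denotes the largest proper filtration component. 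Hence each $L_i(\tau_i)$ contributes only to the $i$--th summand of the symbol bundle, and $\Pi(L(\tau))=\sum_i\ga_i\tau_i$ follows immediately. So everything reduces to producing, for each $i$, a natural operator splitting the top projection $\Cal G\x_PS_i\to\Cal G\x_PR_i$ up to the scalar $\ga_i$.

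For this I would exploit the curved Casimir operator $\Cal C$. Applying the naturality property from \ref{P2.4} to the inclusions $S_i^m\hookrightarrow S_i$ and the quotient maps $S_i\to S_i/S_i^m$ of the $P$--invariant filtration of $S_i$, together with \ref{P2.4}(2) on the completely reducible subquotients, one sees that $\Cal C$ restricts to an order--$\le1$ natural operator on $\Ga(\Cal G\x_PS_i)$ which preserves every subbundle $\Cal G\x_PS_i^m$ and acts on the associated graded bundles by the scalars $\be_i^0$ (on the top piece $R_i$) and $\be_i^1,\dots,\be_i^{n_i}$ (on the lower pieces). Two consequences: first, $(\Cal C-\be_i^0\id)$ maps $\Ga(\Cal G\x_PS_i)$ into $\Ga(\Cal G\x_PS_i^1)$; second, when $\ga_i\ne0$, i.e.\ $\be_i^0\notin\{\be_i^1,\dots,\be_i^{n_i}\}$, the operator $(\Cal C-\be_i^0\id)$ is invertible on $\Ga(\Cal G\x_PS_i^1)$, being block--triangular with invertible diagonal blocks along the (finite) filtration. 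Moreover, picking any principal connection on $\Cal G_0$ splits the filtration of $S_i$ as a $G_0$--module, which both gives a tensorial (non--natural) inclusion $\iota_\nabla\colon\Cal G\x_PR_i\to\Cal G\x_PS_i$ of $R_i$ as the top graded piece with $\Pi\o\iota_\nabla=\id$, and displays $\Cal C$ as block--triangular with scalar diagonal blocks; hence $\Cal C$ satisfies a Cayley--Hamilton type polynomial identity on $\Ga(\Cal G\x_PS_i)$ whose validity does not depend on the splitting. By the Chinese remainder theorem this yields a polynomial $r$ for which $\mathbb P_i:=r(\Cal C)$, a natural differential operator, is the projection onto the $\be_i^0$--generalized eigenspace of $\Cal C$ along the sum of the others.

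I would then set $L_i:=\ga_i\,(\mathbb P_i\o\iota_\nabla)$. When $\ga_i=0$ this is the zero operator, trivially natural with $\Pi\o L_i=0=\ga_i\cdot\id$. When $\ga_i\ne0$: on the one hand $\mathbb P_i-\id$ has image in $\Ga(\Cal G\x_PS_i^1)=\ker(\Pi|_{\Cal G\x_PS_i})$ (since $r(\be_i^0)=1$, so $r(t)-1$ is divisible by $t-\be_i^0$ and $(\Cal C-\be_i^0\id)$ lands in that subbundle), whence $\Pi\o\mathbb P_i\o\iota_\nabla=\Pi\o\iota_\nabla=\id$; on the other hand $\mathbb P_i(\iota_\nabla(\tau_i))$ lies in the $\be_i^0$--generalized eigenspace of $\Cal C$ and is the unique element there with $\Pi$--image $\tau_i$, uniqueness following because the difference of two such lies in that generalized eigenspace intersected with $\Ga(\Cal G\x_PS_i^1)$, which is zero by the invertibility noted above. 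As this characterisation makes no reference to the connection, $\mathbb P_i\o\iota_\nabla$, hence $L_i$, is natural, and $L:=\sum_iL_i$ does the job. The one place where genuine work is needed --- and hence the main obstacle --- is exactly this connection--independence, i.e.\ the naturality of the $L_i$; the rest is combinatorics of the filtration on $S^k\fg\otimes V^*\otimes W[\delta]$ and linear algebra of the block--triangular operator $\Cal C$, carried out as in the splitting operator construction of \cite{Casimir}.
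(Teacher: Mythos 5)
Your proof is correct and follows the same overall strategy as the paper: reduce to the per--component problem of naturally splitting the projection $\Cal G\x_PS_i\to\Cal G\x_PR_i$, and build that splitting out of the curved Casimir operator using its scalar action on the graded pieces of the filtration of $S_i$. The paper is terser here --- it simply invokes theorem~2 of \cite{Casimir} for the existence of a natural operator $L_i$ with $\Pi_i\circ L_i=\ga_i\id$, and then records the explicit product formula $L_i(\tau_i)=\prod_{j=1}^{n_i}(\Cal C-\be_i^j)(s_i)$ for any lift $s_i$, delegating well--definedness and naturality to \cite{Casimir}. You instead re--derive a splitting from scratch: you replace the product polynomial by a Chinese--remainder idempotent $\Bbb P_i=r(\Cal C)$ projecting onto the generalized $\be_i^0$--eigenspace of $\Cal C$, apply it to a connection--dependent lift $\iota_\nabla$, and establish independence of the auxiliary connection via a uniqueness argument: for $\ga_i\ne0$ the generalized $\be_i^0$--eigenspace meets $\ker\Pi=\Ga(\Cal G\x_PS_i^1)$ only in zero, because $(\Cal C-\be_i^0)$ is block--triangular with nonzero scalar diagonal there. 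This gives a self--contained argument resting only on proposition~\ref{P2.4} and the filtration structure, at the cost of redoing the work encapsulated in theorem~2 of \cite{Casimir}. The operator you build need not literally coincide with the paper's ($\Bbb P_i$ is an idempotent, while the product $\prod_j(\Cal C-\be_i^j)$ is $\ga_i\Bbb P_i$ only up to possible nilpotent corrections), but both satisfy $\Pi(L(\tau))=\sum_i\ga_i\tau_i$, which is all the proposition asserts.
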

\begin{proof}
Of course for each $i$, mapping $\tau$ to $\tau_i\in\Ga(\Cal
G\x_PR_i)$ defines a tensorial natural operator. The construction of
splitting operators in theorem 2 of \cite{Casimir} gives us, for each
$i$, a natural differential operator $L_i:\Ga(\Cal
G\x_PR_i)\to\Ga(\Cal G\x_PS_i)$. This has the property that denoting
by $\Pi_i$ the tensorial projection in the other direction, we obtain
$\Pi_i(L(\tau_i))=\ga_i\tau_i$ for the number $\ga_i$ defined in the
proposition. As we have noted above, we can naturally view sections of
$\Cal G\x_PS_i$ as sections of $\Cal VM$, so we can simply define
$L(\tau):=\sum_i L_i(\tau_i)$. 
\end{proof}

It is easy to give an explicit description of $L$, since the
construction of splitting operators in \cite{Casimir} is explicit.
Given $\tau$, we have to choose sections $s_i\in\Ga(\Cal
G\x_PS_i)\subset\Ga(\Cal VM)$ such that $\Pi(s_i)=\tau_i$ for all $i$.
Then we claim that 
$$
L(\tau)=\sum_i\prod_{j=1}^{n_i}(\Cal C-\be_i^j)(s_i).
$$ 
The product for fixed $i$ exactly corresponds to the definition of the
splitting operator from \cite{Casimir}. Naturality of the curved
Casimir operator thus implies that each of the summands equals
$L_i(\tau_i)$, viewed as a section of $\Cal VM$, and the claim
follows.

\subsection{The quantization scheme}\label{3.3}
We are now ready to formulate our first main result.
\begin{thm}\label{T3.3}
  The map $(\tau,\ph)\mapsto \langle L(\tau),D^{(k)}\ph\rangle$
  defines a natural bilinear operator $\Ga(S^kTM\otimes E^*\otimes
  F[\delta])\x\Ga(E)\to \Ga(F[\delta])$. 
  
  For $\tau=\sum_i\tau_i\in\Ga(S^kTM\otimes E^*\otimes F[\delta])$,
  the operator $A_\tau:\Ga(E)\to\Ga(F[\delta])$ defined by
  $A_\tau(\ph):=\langle L(\tau),D^{(k)}\ph\rangle$ is of order at most
  $k$ and has principal symbol $\sum_i\ga_i\tau_i$.
\end{thm}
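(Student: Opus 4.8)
The plan is to establish the two assertions in turn. For naturality and bilinearity, note that $L$ is a natural operator by Proposition \ref{P3.2}, $D^{(k)}$ is natural and linear by the construction in \ref{2.3} together with Proposition \ref{P3.1}, and the pairing $\langle\ ,\ \rangle:\Cal VM\x\Cal W^0M\to F[\delta]$ is a natural bundle map (it comes from the $P$--equivariant contraction $S^k\fg\otimes V^*\otimes W[\delta]$ paired with $S^k\fg^*\otimes V$, which descends to the indicated quotients by the definitions of the filtrations in \ref{3.1} and \ref{3.2}). Hence the composite $(\tau,\ph)\mapsto\langle L(\tau),D^{(k)}\ph\rangle$ is natural, and it is bilinear because $L$ is linear in $\tau$ and $D^{(k)}$ is linear in $\ph$ while the pairing is bilinear.

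For the order and principal symbol, I would fix an arbitrary principal connection on $\Cal G_0\to M$ and use Proposition \ref{P3.1}(2), which says $D^{(k)}\ph=\io(\nabla^{(k)}\ph)+(\text{terms of order}\le k-1\text{ in }\ph)$, where $\io:S^kT^*M\otimes E\to\Cal WM$ is the natural inclusion with image the lowest filtration component $\Cal W^kM$. Since $A_\tau$ is built from $D^{(k)}\ph$ by a fixed tensorial operation (applying $\langle L(\tau),\ -\ \rangle$), the lower order terms contribute only to order $\le k-1$, so $A_\tau$ has order at most $k$ and its principal symbol is obtained by pairing $L(\tau)$ with $\io(\nabla^{(k)}\ph)$ and reading off the leading part. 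The key point is that the pairing $\langle\ ,\ \rangle$ restricted to $\Cal VM\x\Cal W^kM$ is exactly the contraction that factors through the quotient $\Cal VM\to S^kTM\otimes E^*\otimes F[\delta]$: a map in $\Cal W^kM$ vanishes unless all $k$ entries are ``transverse'' to $\Cal A^0M$, so only the image of $L(\tau)$ in $S^kTM\otimes E^*\otimes F[\delta]$ under $\Pi$ matters. Therefore the principal symbol of $A_\tau$ is $\langle\Pi(L(\tau)),\nabla^{(k)}\ph\rangle$ viewed as a bundle map $S^kT^*M\otimes E\to F[\delta]$, which by Proposition \ref{P3.2} is precisely $\sum_i\ga_i\tau_i$ acting on $\nabla^{(k)}\ph$ — i.e.\ the principal symbol is $\sum_i\ga_i\tau_i$.

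The main obstacle I anticipate is the bookkeeping in the previous paragraph: one must verify carefully that the pairing $\langle\ ,\ \rangle$ of a section of $\Cal VM$ with $\io$ of a symmetric $k$--cotensor really does depend only on $\Pi(L(\tau))\in\Ga(S^kTM\otimes E^*\otimes F[\delta])$, and that this recovers the standard symbol--action formula $\tau'(\nabla^{(k)}\ph)$ rather than some twisted version. This is a matter of matching the two filtrations — the one on $\Cal WM=S^k\Cal A^*M\otimes E$ with components indexed so that $\Cal W^kM\cong S^kT^*M\otimes E$, and the one on $\tilde{\Cal V}M$ with quotient $\Cal VM$ and further quotient the bundle of symbols — and checking that they are dual under the contraction, so that the pairing of $\Cal W^kM$ with $\Cal VM$ factors through $\Cal W^kM\x(S^kTM\otimes E^*\otimes F[\delta])$ and equals the tautological evaluation. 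Once this duality of filtrations is set up cleanly (it is essentially the statement that $\fg_1$ is the annihilator of $\frak p$ recalled in \ref{2.3}, applied to symmetric powers), the rest is immediate, and the principal symbol computation reduces to the definition of $\ga_i$ in Proposition \ref{P3.2}.
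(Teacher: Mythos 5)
Your proposal is correct and follows essentially the same route as the paper's proof: naturality by composition of natural pieces, then a choice of connection plus Proposition \ref{P3.1}(2) to isolate the leading term, and the filtration duality to show the pairing of $L(\tau)$ with $\iota(\nabla^{(k)}\ph)$ depends only on $\Pi(L(\tau))$, whence Proposition \ref{P3.2} gives the symbol. The only difference is that you spell out in detail the bookkeeping of dual filtrations that the paper compresses into the phrase ``by the properties of the pairing $\langle\ ,\ \rangle$,'' and your account of that bookkeeping is accurate.
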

\begin{proof}
  Naturality of $L$, $D^{(k)}$, and the pairing $\langle\ ,\ \rangle$
  implies naturality of the bilinear operator. Now fix $\tau$ and
  consider the operator $A_\tau$. Choose any principal connection on
  $\Cal G_0$ and denote by $\nabla$ all the induced linear connections
  on associated vector bundles. Using proposition \ref{P3.1} we see
  that $A_\tau(\ph)=\langle L(\tau),i(\nabla^{(k)}\ph)\rangle$ up to
  terms of order at mots $k-1$ in $\ph$. Hence $A_\tau$ is of order at
  most $k$ and by the properties of the pairing $\langle\ ,\ \rangle$,
  the principal symbol is obatined as the result of pairing
  $\Pi(L(\tau))\in\Ga(S^kTM\otimes E^*\otimes F[\delta])$ with
  $\nabla^{(k)}\ph\in\Ga(S^kT^*M\otimes E)$. Thus the result follows
  from proposition \ref{P3.2}.
\end{proof}

Now we define a weight $\delta\in\Bbb R$ to be \textit{critical} if at
least one of the $\ga_i$ is zero for the chosen value of $\delta$. For
non--critical weights, our theorem immediately leads to a natural
quantization:

\begin{kor}\label{C3.3}
  If the weight $\delta$ is not critical, then the map $\tau\mapsto
  A_{\sum_i\ga_i^{-1}\tau_i}$ defines a natural quantization for the
  bundles $E$ and $F[\delta]$. 
\end{kor}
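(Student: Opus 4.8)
The plan is to combine Theorem \ref{T3.3} with the definition of non-criticality in the most direct possible way, and the only thing that really needs to be checked is that the principal symbol of the resulting operator is the identity. First I would note that for a non-critical $\delta$, every $\ga_i$ is nonzero by definition, so for each section $\tau=\sum_i\tau_i$ of $S^kTM\otimes E^*\otimes F[\delta]$ the rescaled section $\sum_i\ga_i^{-1}\tau_i$ is again a well-defined section of the same bundle; here I am using that the decomposition $\tau=\sum_i\tau_i$ into the components $\tau_i\in\Ga(\Cal G\x_PR_i)$ is the universal direct-sum decomposition constructed in \ref{3.2}, so the rescaling map $\tau\mapsto\sum_i\ga_i^{-1}\tau_i$ is itself a natural tensorial operator. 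Composing this with the natural operator $\tau\mapsto A_\tau$ from Theorem \ref{T3.3} gives a natural operator $\tau\mapsto A_{\sum_i\ga_i^{-1}\tau_i}$, and it is of order at most $k$ by the order statement in Theorem \ref{T3.3}.

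Next I would compute its principal symbol. By Theorem \ref{T3.3}, the principal symbol of $A_{\sum_i\ga_i^{-1}\tau_i}$ is obtained by applying the same recipe that produces $\sum_j\ga_j(\,\cdot\,)_j$ to the input section $\sum_i\ga_i^{-1}\tau_i$. Since the components of $\sum_i\ga_i^{-1}\tau_i$ with respect to the decomposition $\oplus_i(\Cal G\x_PR_i)$ are exactly $\ga_i^{-1}\tau_i$, the principal symbol equals $\sum_i\ga_i(\ga_i^{-1}\tau_i)=\sum_i\tau_i=\tau$. Hence $\tau\mapsto A_{\sum_i\ga_i^{-1}\tau_i}$ is a right inverse to the principal symbol map on the bundle of $k$th order symbols $S^kTM\otimes E^*\otimes F[\delta]$, i.e.\ a quantization in the sense of the introduction, and it is natural because it is built from natural operators. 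This handles order $k$; combining the quantizations obtained this way in all orders $0,1,\dots,k$ (as explained in the introduction, order $0$ being automatic) upgrades this to the full isomorphism $Diff^{\le k}\cong\oplus_{i=0}^k S^iTM\otimes E^*\otimes F[\delta]$, though for the corollary as stated the order-$k$ statement is what is needed.

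The one subtlety worth being explicit about — and what I would regard as the main (minor) obstacle — is making sure that the rescaling is performed component-by-component against precisely the decomposition used to define the $\ga_i$ and against which Theorem \ref{T3.3} computes the symbol. If one rescaled $\tau$ globally by a single constant this would fail whenever the $\ga_i$ are not all equal, which is the generic situation; the point is that the universal splitting $\tau=\sum_i\tau_i$ is canonical, so $\ga_i^{-1}$ can be applied to the $i$th slot alone. Once this bookkeeping is in place there is nothing further to prove: naturality is inherited from the constituents, order $\le k$ from Theorem \ref{T3.3}, and the symbol computation is the one-line cancellation above.
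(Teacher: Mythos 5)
Your proposal is correct and is exactly the argument the paper intends: the corollary is stated without a separate proof because it follows immediately from Theorem~\ref{T3.3} by the rescaling and cancellation you spell out. The one "subtlety" you flag — that the rescaling must be done slot-by-slot against the universal decomposition $\tau=\sum_i\tau_i$ from~\ref{3.2} — is indeed the only thing worth noting, and you handle it correctly.
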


We want to emphasize that the naturality result in the corollary in
particular implies that in the case of the homogeneous model $G/P$ of
the AHS--structure in question the quantization is equivariant (as a
bilinear map) under the natural $G$--action on the spaces of sections
of the bundles in question (which are homogeneous vector bundles in
this case). We can restrict the quantization to the big Schubert cell
in $G/P$, which is diffeomorphic to $\Bbb R^n$, $n=\dim(G/P)$. The
$G$--equivariancy on $G/P$ immediately implies that the result is
equivariant for the Lie subalgebra of vector fields on $\Bbb R^n$
formed by the fundamental vector fields for this $G$--action. Hence
our quantization will specialize to an equivariant quantization in the
usual sense. 

\subsection{The set of critical weights}\label{3.4}
To complete our results, we have to prove that for any choice of
bundles $E$ and $F$ and any order $k$, the set of critical weights is
finite. Verifying this is a question of finite dimensional
representation theory. In fact, we not only get an abstract proof of
finiteness of the set of critical weights, but a method to determine
the set of critical weights for any given example.

In view of proposition \ref{P3.2} and theorem \ref{T3.3}, it is clear
that we have to understand the dependence of the Casimir eigenvalues,
or more precisely of the differences $\be^0_i-\be^j_i$, on $\delta$.
To get a complete understanding of the set of critical weights, one
has to determine the composition series (i.e.~the structure of the
quotients of iterated filtration components), of the $P$--modules
$S_i$. Recall from \ref{3.2} that, as a representation of $G_0$, $S_i$
is simply the direct sum of all the composition factors, so
essentially we have to determine the decomposition of $S_i$ into
irreducible components as a $G_0$--module. From proposition \ref{P2.4}
we know how to determine the numbers $\be$ from the lowest weights of
these irreducible components. Notice that changing the weight $\delta$
corresponds to taking a tensor product with a one--dimensional
representation. In particular, this does not influence the basic
decompositions into irreducible components, apart from the fact that
each of these components is tensorized with that one--dimensional
representation. As we shall see, we can get quite a bit of information
without detailed knowledge of the decomposition into irreducibles,
using only structural information on the possible irreducible
components. We start by proving a basic finiteness result.

\begin{thm}\label{T3.4}
  Fix an irreducible component $R_i\subset S^k\fg_{-1}\otimes V^*\otimes
  W[\delta]$, consider the corresponding Casimir eigenvalue $\be_i^0$,
  and one of the other Casimir eigenvalues $\be_i^j$. Then there is
  exactly one value of $\delta$ for which $\be_i^0=\be_i^j$. Hence
  there are at most $n_i$ many values for $\delta$ for which
  $\ga_i=0$, and at most $\sum_in_i$ critical weights. 
\end{thm}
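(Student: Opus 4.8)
The plan is to make the dependence of the Casimir eigenvalues on $\delta$ completely explicit using Proposition \ref{P2.4}, and to observe that this dependence is quadratic with a leading term that does not depend on which composition factor of $S_i$ one looks at; consequently each difference $\be^0_i-\be^j_i$ depends \emph{affinely} on $\delta$, and a non--constant affine function has exactly one zero.

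First I would recall from \ref{3.4} that passing from weight $0$ to weight $\delta$ amounts to tensoring $S^k\fg_{-1}\otimes V^*\otimes W$, and with it the $P$--module $S_i$ and each of its composition factors, by the one--dimensional representation $\Bbb R[\delta]$. Write $\eta$ for the fixed functional on the Cartan subalgebra with the property that $\Bbb R[w]$ has lowest weight $w\eta$; since $\Bbb R[w]$ is nontrivial for $w\neq0$, we have $\eta\neq0$. Fix a reference weight $\delta_0$ and let $-\la^0_i$ and $-\la^j_i$ be the lowest weights, at $\delta=\delta_0$, of $R_i$ and of the irreducible component of $S_i$ realizing $\be^j_i$. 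Then at a general $\delta$ these lowest weights become $-\la^0_i+(\delta-\delta_0)\eta$ and $-\la^j_i+(\delta-\delta_0)\eta$ — crucially with the \emph{same} $\eta$ — so Proposition \ref{P2.4}, with $t=\delta-\delta_0$, gives
$$
\be^0_i(\delta)=\|\la^0_i-t\eta\|^2+2\langle\la^0_i-t\eta,\rho\rangle=t^2\|\eta\|^2-2t\bigl(\langle\la^0_i,\eta\rangle+\langle\eta,\rho\rangle\bigr)+\be^0_i(\delta_0),
$$
and likewise for $\be^j_i$. Subtracting, the quadratic terms $t^2\|\eta\|^2$ cancel, leaving
$$
\be^0_i(\delta)-\be^j_i(\delta)=-2t\,\langle\la^0_i-\la^j_i,\eta\rangle+\bigl(\|\la^0_i\|^2-\|\la^j_i\|^2\bigr)+2\langle\la^0_i-\la^j_i,\rho\rangle,
$$
which is affine in $\delta$. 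Hence it has exactly one zero provided $\langle\la^0_i-\la^j_i,\eta\rangle\neq0$, and establishing this non--vanishing is the only substantial point.

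For that I would identify $\langle\,\cdot\,,\eta\rangle$ with a nonzero multiple of $\la\mapsto\la(E)$, where $E\in\frak z(\fg_0)$ is the grading element, i.e.~$\ad(E)$ acts on $\fg_\ell$ by $\ell$. Both $\eta$ and the $\langle\ ,\ \rangle$--dual of $E$ are proportional to the fundamental weight of $\fg_\bC$ attached to the simple root defining the $|1|$--grading: for $\eta$, because $\Bbb R[1]$, being one--dimensional, restricts trivially to the semisimple part $[\fg_0,\fg_0]$, so $\eta$ kills $\frak h\cap[\fg_0,\fg_0]$; for the dual of $E$, because $E$ is orthogonal to $\frak h\cap[\fg_0,\fg_0]$ under the Killing form (as $[E,x]=0$ for all $x\in\fg_0$). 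Both functionals are nonzero, hence proportional with nonzero factor, so $\langle\la,\eta\rangle$ is a nonzero multiple of $\la(E)$. Now $E$ acts on the homogeneity--$m$ part of $S^k\fg\otimes V^*\otimes W[\delta]$ by the scalar $m+c$ with $c$ independent of $m$ (it acts by scalars on the irreducible $G_0$--modules $V^*$, $W$ and on $\Bbb R[\delta]$, and as the degree--zero derivation on $S^k\fg$), so every weight of an irreducible $G_0$--subquotient contained in one homogeneous piece takes the same value on $E$. Since the $P$--module $S_i$ is generated by $R_i\subset S^k\fg_{-1}\otimes V^*\otimes W[\delta]$, which sits in homogeneity $-k$, while $\fg_1$ shifts homogeneity upward by one (it acts trivially on $V^*$, $W$, $\Bbb R[\delta]$), the factor $R_i$ occupies homogeneity $-k$ and the factor realizing $\be^j_i$, being contained in a deeper filtration component of $S_i$, occupies strictly greater homogeneity. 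Therefore $\la^0_i(E)\neq\la^j_i(E)$, so $\langle\la^0_i-\la^j_i,\eta\rangle\neq0$, and the critical $\delta$ attached to the pair $(i,j)$ is unique.

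The counting statements follow immediately: $\ga_i=\prod_{j=1}^{n_i}(\be^0_i-\be^j_i)$ is a product of $n_i$ non--constant affine functions of $\delta$, each with a single zero, so $\ga_i$ vanishes for at most $n_i$ values of $\delta$; as $i$ runs over the finite index set of the decomposition $S^k\fg_{-1}\otimes V^*\otimes W[\delta]=\bigoplus_iR_i$, there are at most $\sum_in_i$ critical weights. I expect the cancellation of the quadratic terms, although it is the crucial structural observation, to be essentially a one--line computation once the linear $\delta$--dependence of the lowest weights is in place; the genuinely delicate step is the identification of $\langle\,\cdot\,,\eta\rangle$ with a multiple of the grading--element eigenvalue, together with the homogeneity bookkeeping that forces $\la^0_i(E)\neq\la^j_i(E)$.
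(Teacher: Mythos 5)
Your proof is correct and follows essentially the same route as the paper's: write the Casimir eigenvalue from Proposition \ref{P2.4}, observe that the quadratic term in $\delta$ cancels in the difference $\be^0_i-\be^j_i$ so that it is affine in $\delta$, and then check the linear coefficient $\langle\la^0_i-\la^j_i,\eta\rangle$ is nonzero by identifying $\langle\cdot,\eta\rangle$ with (a nonzero multiple of) evaluation at the grading element $H_0$ and exploiting that $R_i$ and the factor giving $\be^j_i$ sit at different filtration/homogeneity levels. This is precisely the paper's argument, with $\eta=-\mu$ and $E=H_0$ in the notation of the original proof.
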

\begin{proof}
  Let us first make a few comments. The Casimir eigenvalues can be
  computed from lowest weights, which are defined via complexification
  of non--complex representations and of the Lie algebra in question.
  Since these complexifications do not change the decomposition into
  irreducible components, we may work in the setting of complex
  $|1|$--graded Lie algebras throughout the proof. Second, recall that
  for an irreducible representation of a complex semisimple Lie
  algebra, the negative of the lowest weight coincides with the
  highest weight of the dual representation. In this way, standard
  results on highest weights have analogs for the negatives of lowest
  weights.
  
  As we have noted in proposition \ref{P2.4}, for a representation
  with lowest weight $-\la$, the Casimir eigenvalue on sections of the
  corresponding induced bundle is given by
  $\|\la\|^2+2\langle\la,\rho\rangle=\langle\la,\la+2\rho\rangle$.
  Writing $c_\la$ for this number, the last expression immediately
  shows that for two weights $\la$ and $\la'$, we have
  \begin{equation}
    \label{cdiff}
c_{\la'}-c_\la=2\langle \la'-\la,\la+\rho\rangle+\|\la'-\la\|^2.     
  \end{equation}
  We have to understand, how this is influenced by changing $\delta$.
  Denoting by $\mu$ the highest weight associated to the
  representation $\Bbb R[1]$, which induces the bundle $\Cal E[1]$,
  the bundle $\Cal E[w]$ corresponds to the weight $w\mu$. Moving from
  $\delta$ to $\delta+w$ corresponds to forming a tensor product with
  $\Cal E[w]$, and hence replacing $\la$ by $\la+w\mu$ and $\la'$ by
  $\la'+w\mu$. This means that the difference of the two weights
  remains unchanged, and equation \eqref{cdiff} shows that
  \begin{equation}
    \label{crit}
c_{\la'+w\mu}-c_{\la+w\mu}=c_{\la'}-c_\la+2w\langle
\la'-\la,\mu\rangle. 
  \end{equation}
  Now by definition, the weights of the representation $\fg$ are
  exactly the roots of $\fg$. Consequently, any weight of $S^k\fg$ is
  a sum of $k$ roots. Further, it is well know that the highest weight
  of any irreducible component in a tensor product of two irreducible
  representations can be written as a sum of the highest weight of one
  of the two factors and some weight of the other factor. Passing to
  duals, we see that the same statement holds for the negatives of
  lowest weights. Thus, the negative of the lowest weight of any
  irreducible component of $S^k\fg\otimes V^*\otimes W$ can be written
  as a linear combination of the negative of the lowest weight of an
  irreducible component of $V^*\otimes W$ and at most $k$ roots.
  
  Now recall (see \cite{Yamaguchi}) that for a complex $|1|$--graded
  Lie algebra, one can choose a Cartan subalgebra $\frak h\subset\fg$
  and positive roots in such a way that there is a unique simple root
  $\al_0$ for which the corresponding root space is contained in
  $\fg_1$. More precisely, for a root $\al$, the corresponding root
  space sits in $\fg_i$ for $i={-1,0,1}$, where $i$ is the coefficient
  of $\al_0$ in the expansion of $\al$ as a sum of simple roots. Then
  the center of $\fg_0$ is generated by the unique element
  $H_0\in\frak h$ for which that $\al_0(H_0)=1$, while all other
  simple roots vanish on $H_0$. The orthocomplement of $H_0$ in $\frak
  h$ is a Cartan subalgebra of the semisimple part of $\fg_0$.
  
  Since the semisimple part of $\fg_0$ acts trivially on $\Bbb R[1]$,
  we conclude that $\mu(H)=aB(H,H_0)$ for some nonzero number $a$ and
  all $H\in\frak h$, where $B$ denotes the Killing form of $\fg$.
  Going through the conventions, it is easy to see that actually
  $a<0$. By definition of the inner product, this means that for any
  weight $\nu$, we have $\langle \mu,\nu\rangle=a\nu(H_0)$. Since
  $H_0$ acts by a scalar on any irreducible representation, it also
  acts by a scalar on all of $S^k\fg_{-1}\otimes V^*\otimes W$. But
  this implies that if $-\nu$ is the lowest weight of an irreducible
  component of $V^*\otimes W$, then $\nu(H_0)=a_0$ for a fixed number
  $a_0$.  Consequently, if $-\nu$ is the lowest weight of an
  irreducible component of the quotient of two consecutive filtrations
  components in $\Cal VM$, say the one with index $\ell$ by the one
  with index $\ell+1$, $\nu(H_0)=a_0+\ell$. In particular, if $-\la$
  is the lowest weight of $R_i$, then $\la(H_0)=a_0-k$. Likewise if
  $-\la'$ is the lowest weight giving rise to $\be_i^j$ then
  $\la'(H_0)=a_0+\ell$ for some $\ell>-k$. Thus we conclude that
  $\langle\la'-\la,\mu\rangle=a(k+\ell)<0$, and formula
  \eqref{crit} shows that $\la$ and $\la'$ give rise to exactly one
  critical weight.
\end{proof}

Note that the proof actually leads to an explicit formulae for the
critical weights. Suppose that $-\la$ and $-\la'$ are the lowest
weights of irreducible components giving rise to $\be_i^0$ and
$\be_i^j$, and that the irreducible component corresponding to $-\la'$
sits in the quotient of the $\ell$th by the $(\ell+1)$st filtration
component. Then formulae \eqref{cdiff} and \eqref{crit} from the proof
show that the critical weight caused by these two components is given
by
\begin{equation}
  \label{critweight}
\delta=\frac{2\langle\la'-\la,\la+\rho\rangle+\|\la'-\la\|^2}
{2\langle\la'-\la,\mu\rangle}
\end{equation}
where $\mu$ is the highest weight of the representation $\Bbb R[1]$.
In particular, we can use this formula to completely determine the set
of all critical weights if we know all the $P$--representations $S_i$
together with their composition structure.

\subsection{Restrictions on critical weights}\label{3.5}
We can also get some information on the set of critical weights
without this detailed knowledge. For any $P$--module, we can look at
the restriction of the $P$--action to $G_0$ and the restriction of the
infinitesimal action of $\frak p$ to the abelian subalgebra $\fg_1$.
Since $P$ is the semidirect product of $G_0$ and $\exp(\fg_1)$,
one immediately concludes that any subspace in a representation of
$P$, which is $G_0$--invariant and closed under the infinitesimal
action of $\fg_1$ is actually $P$--invariant. By construction, the
actions of elements of $\fg_1$ on any $P$--module commute. Hence
the iterated action of elements of $\frak g_1$ (in the $P$--module
$S^k\fg\otimes V^*\otimes W[\delta]$) on $R_i$ define maps
$S^\ell\fg_1\otimes R_i\to S^k\fg \otimes V^*\otimes W[\delta]$. By
construction, the image sits in the filtration component with index
$\ell-k$ as well as in $S_i$. Hence we actually obtain a map
$\oplus_{\ell=0}^kS^\ell\fg_1\otimes R_i\to S_i$, which is evidently
$G_0$--equivariant. In particular, the image is a $G_0$--invariant
subspace of $S_i$ and from the construction it follows immediately
that it is also closed under the infinitesimal action of $\fg_1$.

The upshot of this is that any $G_0$--irreducible component of $S_i$
also occurs in $\oplus_{\ell=0}^kS^\ell\fg_1\otimes R_i$. If we
determine the set of all weights $\delta$ for which an irreducible
component of $\oplus_{\ell=1}^kS^\ell\fg_1\otimes R_i$ corresponds to
the same Casimir eigenvalue as $R_i$, then the union of these sets for
all $i$ contains the set of all critical weights. 

We next work out more details on the set of critical weights for some
examples in the case of even dimensional conformal structures of
arbitrary signature $(p,q)$. (This is significantly more complicated
than the case of projective structures, which is mainly considered in
the literature). Hence $G_0$ is the conformal group $CO(p,q)$ and
$\fg_{-1}$ is the standard representation $\Bbb R^n$, $n=p+q$ of this
group, and we assume that $n$ is even. As above, we may work in the
complexified setting, and we will use the notation, conventions and
results from \cite{CasEx} for weights. We will fix representations $V$
and $W$ and determine critical weights starting from
$S^k\fg_{-1}\otimes V^*\otimes W$ (i.e.~with $\delta=0$).

Let us assume that $S^k\fg_{-1}\otimes V^*\otimes W$ contains an
irreducible component $R_i\cong \Bbb R[w]$ for some $w\in\Bbb R$. The
decomposition of $S^\ell\Bbb R^{n*}$ into irreducible components is
given by $S^\ell_0\Bbb R^{n*}\oplus S^{\ell-2}_0\Bbb R^{n*}[-2]\oplus
S^{\ell-4}_0\Bbb R^{n*}[-4]\oplus\dots$, where the subscript $0$
indicates the totally tracefree part. From \ref{3.5} we thus conclude
that in any case all the irreducible components of $P$--module $S_i$
generated by $R_i$ must be of the form $S^\ell\Bbb R^{n*}[w-2m]$ for
non--negative integers $\ell$ and $m$ such that $\ell+2m\leq k$.

In particular, for $k=1$, the only possibility is $\Bbb R^n[w]$. In
the notation from section 2.4 of \cite{CasEx}, $\Bbb R[w]$ corresponds
to the weight $(w|0,\dots)$ while $\Bbb R^{n*}[w]$ corresponds to
$(w-1|1,0,\dots)$, which immediately shows that the corresponding
critical weight is $\delta=-w$. For $k=2$, we get $S^2_0\Bbb
R^{n*}[w]$ and $\Bbb R[w-2]$, which correspond to $(w-2|2,0,\dots)$
and $(w-2|0,\dots)$ and the critical weights $1-w$ and
$1-w-\tfrac{n}2$.

For a general order $k$, the possible representations are
$(w-\ell|\ell-2m,0,\dots)$ for $\ell\leq k$ and $\ell-2m\geq 0$ and
one easily verifies directly:
\begin{prop}\label{P3.5}
  The possible critical weights caused by an irreducible component
  $\Bbb R[w]\subset S^k\fg_{-1}\otimes V^*\otimes W$ are contained in
  the set 
$$
\left\{-w-1+\ell-2m+\frac{m(2+2m-n)}{\ell}:0\leq\ell\leq k,0\leq
  2m\leq\ell\right\}.
$$ 
\end{prop}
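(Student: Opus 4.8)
The plan is to combine the general critical-weight formula \eqref{critweight} with the explicit list of possible $G_0$-components of $S_i$ derived just above the statement. First I would set up the data: the component $R_i\cong\Bbb R[w]$ has lowest weight $-\la$ with $\la=-(w|0,\dots,0)$ (in the notation of \cite{CasEx}), and by the discussion in \ref{3.5} every other $G_0$-irreducible component of $S_i$ that can produce a $\be_i^j$ is, after sitting in the $\ell$-th filtration quotient, of the form $S^{\ell-2m}_0\Bbb R^{n*}[w-2m]$ for integers $\ell,m$ with $0\le 2m\le\ell\le k$, corresponding to the weight $-\la'$ with $\la'=-(w-\ell\mid \ell-2m,0,\dots,0)$. (The bookkeeping on weights here is exactly the one already carried out for $k=1,2$ immediately before the proposition: the power $[w-2m]$ comes from the trace terms in $S^\ell\Bbb R^{n*}$, and the shift from $w$ to $w-\ell$ in the first slot records that the component lives in filtration degree $\ell$ below the top, matching the relation $\la'(H_0)=a_0+\ell$ from the proof of Theorem \ref{T3.4}.)

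Next I would feed $\la$ and $\la'$ into \eqref{critweight}. The key inputs are: $\la'-\la=-(-\ell\mid \ell-2m,0,\dots)=(\ell\mid -(\ell-2m),0,\dots)$, the known form of $\rho$ for the conformal algebra $\mathfrak{so}(n+2,\Bbb C)$ (half-sum of positive roots, again as recorded in \cite{CasEx}), the weight $\mu$ of $\Bbb R[1]$, and the explicit inner product $\langle\ ,\ \rangle$ on weights in these coordinates. Since $\langle(a\mid b_1,b_2,\dots),(a'\mid b_1',b_2',\dots)\rangle$ is (up to normalization) $-aa'/?\,+\sum b_j b_j'$ with the density-slot and the $\mathfrak{so}$-slots contributing with the standard signs, each of the three quantities $2\langle\la'-\la,\la+\rho\rangle$, $\|\la'-\la\|^2$, and $2\langle\la'-\la,\mu\rangle$ becomes an explicit polynomial in $w,\ell,m,n$. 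Then \eqref{critweight} is a ratio of two such polynomials, and I would simplify it. The claimed answer $-w-1+\ell-2m+\frac{m(2+2m-n)}{\ell}$ has a single term with an $\ell$ in the denominator, which pins down what the numerator and denominator of \eqref{critweight} must be: the denominator $2\langle\la'-\la,\mu\rangle$ is (a nonzero multiple of) $\ell$, consistent with $\la'(H_0)-\la(H_0)=\ell$ and $\langle\mu,\nu\rangle=a\,\nu(H_0)$ from the proof of Theorem \ref{T3.4}; the numerator splits as an $\ell$-linear part giving $\ell(-w-1+\ell-2m)$-type contributions plus the remainder $m(2+2m-n)$ coming from the $\mathfrak{so}$-norm of the tracefree slot. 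So the proof is really: write out the two pairings, observe the denominator is $c\ell$ for an explicit constant $c$, expand the numerator, divide, and check it matches.

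The main obstacle I anticipate is bookkeeping with conventions rather than anything conceptual: getting the normalization of $\langle\ ,\ \rangle$, the precise coordinates for $\rho$ and $\mu$, and the sign of $a$ all consistent with \cite{CasEx}, so that the final rational expression comes out exactly as stated (in particular that it is $-w$ for $\ell=1,m=0$, recovering the $k=1$ case, and gives $\{1-w,\,1-w-\tfrac n2\}$ for $k=2$, recovering the $k=2$ case listed above — these two checks are the sanity tests I would run). A secondary point to be careful about: the statement says the critical weights are \emph{contained in} this set, not equal to it, so I only need that \emph{every} $\be_i^j$ arising from an actual component of $S_i$ is among the listed $(\ell,m)$; I do not need every $(\ell,m)$ in the range to actually occur, which is why invoking the overcounting bound from \ref{3.5} (components of $S_i$ occur in $\bigoplus_{\ell}S^\ell\fg_1\otimes R_i$) suffices and no finer analysis of the composition series of $S_i$ is needed.
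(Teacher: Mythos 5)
Your plan is exactly what the paper does (its proof there is just ``one easily verifies directly''): take the list of candidate weights $(w-\ell|\ell-2m,0,\dots)$ from the preceding paragraph, feed them together with $(w|0,\dots)$ into \eqref{critweight}, and simplify the resulting rational expression, with the $k=1,2$ cases computed just above serving as the sanity check on signs and normalizations. You also correctly identify that only containment is claimed, so the overcount coming from $\bigoplus_\ell S^\ell\fg_1\otimes R_i$ in \ref{3.5} suffices and no finer analysis of the composition series of $S_i$ is needed.
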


We can derive an effective upper bound, above which there are no
critical weights for quantization in \textit{any} order. This can be
viewed as a vast generalization of the results in section 3.1 of
\cite{DLO} on quantization of operators on functions. Observe first
that it may happen that for the representations $V$ and $W$ inducing
$E$ and $F$, the tensor product $V^*\otimes W$ itself splits into
several irreducible components. For example, if $V=W$, then one always
has the one dimensional invariant subspace spanned by the identity.
Given an irreducible component $U\subset V^*\otimes W$ and
$\delta\in\Bbb R$, we have $S^k\fg_{-1}\otimes U[\delta]\subset
S^k\fg_{-1}\otimes V^*\otimes W[\delta]$, so one may talk about
symbols of type $U$ of any order and any weight. Of course, one may
apply the constructions from \ref{3.1}--\ref{3.3} directly to this
subspace.  As an irreducible representation of $\fg_0$, $U[\delta]$
has an associated lowest weight. Using this, we can now formulate

\begin{thm}\label{T3.5}
  Let $-\la$ be the lowest weight of $U[\delta]$ and assume that
  $\delta$ is chosen in such a way that $\la$ is $\fg$--dominant. Then
  for any order $k$, the weight $\delta$ is non--critical for symbols
  of type $U$. In particular, this always holds for sufficiently large
  values of $\delta$. 
\end{thm}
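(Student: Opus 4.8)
The plan is to follow the reduction used in the proof of Theorem~\ref{T3.4}: complexification affects neither lowest weights, nor Casimir eigenvalues, nor any of the decompositions involved, so I work with a complex $|1|$--graded Lie algebra throughout. By Proposition~\ref{P2.4} an irreducible bundle with lowest weight $-\nu$ carries the curved Casimir as multiplication by $c_\nu:=\langle\nu,\nu+2\rho\rangle=\|\nu+\rho\|^2-\|\rho\|^2$. Hence $\delta$ is non--critical for symbols of type $U$ as soon as, for every irreducible $\fg_0$--component $R_i\subseteq S^k\fg_{-1}\otimes U[\delta]$ with negative lowest weight $\la_i$, and every irreducible $\fg_0$--component sitting in a strictly lower filtration degree of the $P$--module $S_i$ generated by $R_i$, with negative lowest weight $\la'$, one has $c_{\la_i}\ne c_{\la'}$.

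First I would extract the two weight relations available. Since $R_i\subseteq S^k\fg_{-1}\otimes U[\delta]$ and $U[\delta]$ has lowest weight $-\la$, the usual description of lowest weights in tensor products gives $\la_i=\la+\eta$, where $\eta$ is a sum of $k$ roots of $\fg$ lying in $\fg_1$. By the argument in~\ref{3.5}, every $\fg_0$--component of $S_i$ already occurs in $\bigoplus_{\ell=0}^kS^\ell\fg_1\otimes R_i$, so $\la'=\la_i-\chi$ with $\chi$ a nonzero sum of $\ell$ roots in $\fg_1$, $1\le\ell\le k$. Two elementary facts about $|1|$--gradings enter: because $\fg_2=0$, the $\al_0$--string through a root $\be$ of $\fg_1$ has no upward part, so $\langle\be,\al_0^\vee\rangle\ge0$; and the sum of two roots of $\fg_1$ is never a root, so any two such roots pair non--negatively. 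The first gives $\langle\eta,\al_0^\vee\rangle\ge0$, whence $\langle\la_i,\al_0^\vee\rangle=\langle\la,\al_0^\vee\rangle+\langle\eta,\al_0^\vee\rangle\ge0$; together with the automatic $\fg_0$--dominance of $\la_i$, the hypothesis that $\la$ is $\fg$--dominant propagates to the statement that $\la_i$ is $\fg$--dominant. Applying the same computation to $\la=\la_U-\delta\mu$, where $-\la_U$ is the lowest weight of $U$ and the weight $\mu$ of $\Cal E[1]$ satisfies $\langle\mu,\al_0^\vee\rangle<0$ and $\langle\mu,\al_j^\vee\rangle=0$ for $j\ne0$, shows that $\la$ is $\fg$--dominant for all large $\delta$, which gives the final assertion.

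It remains to show $c_{\la_i}\ne c_{\la'}$ when $\la_i$ is $\fg$--dominant. Then $\la_i+\rho$ is strictly $\fg$--dominant and $\la'+\rho=(\la_i+\rho)-\chi$ lies weakly below it in the root order. I would invoke the classical fact that a weight of the same norm as a strictly dominant weight $\Lambda$ and lying weakly below $\Lambda$ must be Weyl conjugate to $\Lambda$ (first ``$\Lambda$ dominant, $\mu\le\Lambda$ imply $\mu_+\le\Lambda$'', then ``dominant, comparable, equal norm imply equal''); thus $c_{\la_i}=c_{\la'}$ would force $\la'=w\cdot\la_i:=w(\la_i+\rho)-\rho$ for some $w\ne e$, and, $\la'$ being $\fg_0$--dominant, one may take $w$ minimal in its coset modulo the Weyl group of $\fg_0$. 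For such $w$ the inversion set of $w^{-1}$ consists of roots of $\fg_1$, consistently with the shape of $\chi$, and evaluation on the grading element $H_0$ gives $\chi(H_0)=\ell\le k$. So the theorem reduces to excluding such a ``short'' shifted--Weyl coincidence inside $S_i$: the $\fg_0$--component with negative lowest weight $w\cdot\la_i$ must fail to appear in $S_i$ — either it does not occur in $\bigoplus_{\ell=1}^kS^\ell\fg_1\otimes R_i$ at all, or it maps to zero in $\Cal VM$ because the relevant iterated bracket of elements of $\fg_1$ against a lowest weight vector of $R_i$ vanishes. Establishing this uses that only $\le k$ copies of $\fg_1$ are available, that the positive integers $\langle\la_i+\rho,\be^\vee\rangle$ ($\be$ a root of $\fg_1$) are $\ge1$ and exceed $k$ outside a handful of boundary configurations (thanks to the $\fg$--dominance of $\la_i$), and the explicit form of the map $\fg_1\times\fg_{-1}\to\fg_0$ on $R_i$ in those residual configurations. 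I expect this last verification — disposing of the few residual coincidences by a vanishing--of--brackets argument — to be the main obstacle; everything before it is routine once the weight relations and the inheritance of $\fg$--dominance are in place.
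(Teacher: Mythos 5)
Your reduction is essentially sound up to a point: working over $\bC$, deriving the weight relations $\la_i=\la+\eta$ and $\la'=\la_i-\chi$, showing that $\fg$--dominance of $\la$ propagates to $\la_i$, and concluding from $\|\la'+\rho\|=\|\la_i+\rho\|$ with $\la_i+\rho$ strictly dominant and $\la'<\la_i$ that $\la'=w\cdot\la_i$ for some $w\neq e$ is all correct and is indeed in the same spirit as the weight arithmetic in the proof of Theorem~\ref{T3.4}. However, at precisely the step that carries the entire content of the theorem you stop: you state that one must ``exclude such a short shifted--Weyl coincidence inside $S_i$'' and propose to do so via bounds on $\langle\la_i+\rho,\be^\vee\rangle$ against $k$ plus an explicit bracket computation in ``residual configurations,'' and you yourself flag this as the main obstacle rather than carrying it out. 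That is a genuine gap, not a routine verification: showing that the $\fg_0$--types of lowest weight $-w\cdot\la_i$ ($w\neq e$) do not occur in $S_i$ is exactly what remains to be proved. Moreover the sketched bound does not hold from $\fg$--dominance alone (for $\la_i$ small, $\langle\la_i+\rho,\be^\vee\rangle$ need not exceed $k$), so the proposed route would also need repair.

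The paper closes this gap by an entirely different and more structural argument, which you should compare to your plan. When $\la$ is $\fg$--dominant and integral, $U[\delta]$ is realized as a quotient of the irreducible $\fg$--module $\tilde U$ with lowest weight $-\la$, and $R_i$ lifts into an irreducible $\fg$--submodule $\tilde S_i\subset S^k\fg\otimes\tilde U$ whose $\frak p$--irreducible quotient is $R_i$. Every $\fg_0$--type of $S_i$ occurs in $\tilde S_i$, and by lemma~2 of \cite{Casimir} the differences $\be_i^j-\be_i^0$ are twice the eigenvalues of the Kostant Laplacian $\square$ on $\tilde S_i$; Kostant's theorem then says $\ker\square|_{\tilde S_i}$ is exactly the single $\fg_0$--type $R_i$, so no other $\fg_0$--type of $S_i$ can share its Casimir eigenvalue. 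This is precisely the statement your Weyl--conjugacy reduction needs but does not establish. Finally, for the non--integral dominant case and the ``sufficiently large $\delta$'' claim, the paper combines Cartier's non--positivity of $\square$ with the sign $\langle\la'-\la,\mu\rangle<0$ and the linear dependence \eqref{crit} of $c_{\la'}-c_\la$ on $\delta$; your write--up does not address how to pass from the integral to the merely dominant case at all. In short: your approach is a valid different framing, but as written it reduces the theorem to a claim equivalent in strength to the Kostant--theorem input and then does not prove it.
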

\begin{proof}
  Let us first assume that $\la$ is $\fg$--dominant and integral. Then
  there is a finite dimensional irreducible representation $\tilde U$
  of $\fg$ with lowest weight $-\la$. We can pass to the dual $\tilde
  U^*$, and look at the $\frak p$--submodule generated by a highest
  weight vector. It is well known that this realizes the irreducible
  representation of $\frak p$ with highest weight $\la$. Passing back,
  we see that $U[\delta]$ can be naturally viewed as a quotient of
  $\tilde U$.  Consequently, for any $k\geq 0$, we can naturally view
  $S^k\fg_{-1}\otimes U[\delta]$ as quotient of the representation
  $S^k\fg\otimes\tilde U$ of $\fg$.  In particular, for any
  irreducible component $R_i\subset S^k\fg_{-1}\otimes U[\delta]$ we
  obtain a corresponding $\fg$--invariant subset $\tilde S_i\subset
  S^k\fg\otimes\tilde U$ (which can be taken to be $\frak
  g$--irreducible) with $\frak p$--irreducible quotient $R_i$. It is
  also evident that applying the natural map $S^k\fg\otimes\tilde U\to
  S^k\fg\otimes U[\delta]$ to $\tilde S_i$ and then factoring by the
  filtration component of degree zero, the image has to contain the
  $\frak p$--submodule $S_i$ generated by $R_i$. In particular, any
  $\fg_0$--irreducible component of $S_i$ also has to occur in $\tilde
  S_i$.
  
  But for the bundles corresponding to irreducible representations of
  $\frak g$, the critical weights are described in lemma 2 of
  \cite{Casimir} in terms of the Kostant Laplacian $\square$ and the
  value $c_0$ by which the (algebraic) Casimir operator of $\fg$ acts
  on the irreducible representation $\tilde S_i$. Now $c_0$ coincides
  with the Casimir eigenvalue $\be^0_i$ in our sense and hence lemma 2
  of \cite{Casimir} shows that $\be_i^j-\be^0_i$ can be computed as
  twice the eigenvalue of $\square$ on the irreducible component
  giving rise to $\be^i_j$. Now Kostant's theorem from \cite{Kostant}
  in particular implies that the kernel of $\square$ on $\tilde S_i$
  consists of $R_i$ (viewed as a $\fg_0$--invariant subspace) only.
  This implies the result if $\la$ is $\fg$--dominant and integral.
  
  More is known about the eigenvalues of $\square$, however. The lemma
  in Cartier's remarks (\cite{Cartier}) to Kostant's article shows
  that all eigenvalues of square are non--positive. In the terminology
  of the proof of theorem \ref{T3.4} this means that
  $c_{\la'}-c_\la<0$. There we have also seen that
  $\langle\la'-\la,\mu\rangle<0$, so formula \eqref{crit} from that
  proof shows that $c_{\la'+w\mu}-c_{\la+w\mu}<0$ for $w\geq 0$. Now
  if $-\la$ is the lowest weight of a finite dimensional irreducible
  representation of $\frak p$, then $\la$ is $\frak p$--dominant and
  $\frak p$--integral. But this means that $\la+w\mu$ is $\frak
  g$--dominant for sufficiently large values of $w$ and $\frak
  g$--integral for all integral values of $w$, which implies all the
  remaining claims.   
\end{proof}

%
%

\subsection{Low order quantizations for even--dimensional conformal
  structures}\label{3.6}
Let us move to more complete examples in the setting from above. We
will restrict to the cases that $V^*\otimes W\cong\Bbb R$ and $V^*\otimes
W\cong\Bbb R^n$, and to orders at most three in the first case and at
most two in the second case. For $V^*\otimes W\cong\Bbb R$, we get
quantizations on density bundles, which can be compared to available
results in the literature. The case $V^*\otimes W\cong\Bbb R^n$ can be
used to understand operators mapping weighted one--forms to densities
and, vice versa, mapping densities to weighted one--forms. 

We have already noted in \ref{3.5} that the decomposition of
$S^k\fg_{-1}$ is given by $\oplus_{\ell\leq
  k/2}S^\ell\fg_{-1}[2\ell]$. 

\noindent
\textbf{First order operators on densities}. Here the symbol
representation is $\fg_{-1}\cong\Bbb R^n$, so this is irreducible and
corresponds to the weight $(1|1,0\dots)$. Likewise, $\fg$ is an
irreducible representation of $\fg$, and there is only one relevant
level which may produce critical weights, namely
$\fg_0\cong\La^2\Bbb R^n[2]\oplus\Bbb R$, which is the quotient of the
filtration components of degrees 0 and 1. The summands correspond to
the weights $(0|1,1,0,\dots)\oplus (0|0,\dots)$ and we obtain the
critical weights $-n$ and $-2$. 

\smallskip
\noindent
\textbf{Second order operators on densities}. The symbol
representation splits into two irreducible components $R_1$ and $R_2$
corresponding to the weights $(2|2,0,\dots)$ (tracefree symbols) and
$(2|0,\dots)$ (symbols which are pure trace, i.e.~of Laplace type).
Also, the representation $S^2\fg$ of $\fg$ is not irreducible any
more, but splits into four irreducible components. One of them is a
trivial representation (corresponding to the Killing form) and one is
isomorphic to $\La^4\Bbb R^{n+2}$. These two components are entirely
contained in the filtration component of degree $-1$, so they do not
contribute to the quotient by the largest filtration component. One of
the remaining two irreducible components is isomorphic to $S^2_0\Bbb
R^{n+2}$. The quotient of this component by its intersection with the
largest filtration component is exactly $R_2$, so all of $S_2$ must be
contained in this part. Finally, there is the highest weight component
$\circledcirc^2\fg\subset S^2\fg$ (the Cartan product of two copies of
$\fg$), whose quotient by the largest filtration component is $R_1$.
Hence $S_1$ is contained in this component.

To determine the possible critical weights it thus suffices to
analyze the composition structure of the representations
$\circledcirc^2\fg$ and $S^2_0\Bbb R^{n+2}$. This can be done fairly
easily using the description of representations of $\fg$ in terms of
their $\frak p$--irreducible quotients from section 3 of \cite{BCEG},
in particular the result in lemma 3.1 of this article. One has to use
the fact that the Lie algebra cohomology groups that occur are
algorithmically computable using Kostant's version of the
Bott--Borel--Weil theorem. 

This shows that in the language of weights, the two relevant levels of
$\circledcirc^2\fg$ decompose as 
$$
\begin{aligned}
  &(1|2,1,0,\dots)\oplus (1|1,0,\dots)\\
  &(0|2,2,0,\dots)\oplus (0|2,0,\dots)\oplus (0|1,1,0,\dots)\oplus
  (0|0,\dots),
\end{aligned}
$$
and consequently, one obtains the critical weights $-3$, $-2$, $-2-n$,
$-1-n$, $(-2-n)/2$, and $(-4-n)/2$. 

For the case of symbols which are pure trace, the decompositions of
the level for the index $-1$ is irreducible corresponding to the weight
$(1|1,0\dots)$, while the level for index zero decomposes as
$(0|2,0,\dots)\oplus (0|0,\dots)$. This gives rise to the critical
weights $-2$, $-1$ and $(-2-n)/2$.

\smallskip

\noindent
\textbf{Third order operators on densities}. The analysis is closely
analogous to the second order case, we mainly include the results for
comparison to \cite{3rd-order}. The symbol representation splits into
two irreducible components and again these two components correspond
to two of the seven irreducible components in $S^3\fg$. Namely,
tracefree symbols ($S^3_0\Bbb R^n$) correspond to the highest weight
component $\circledcirc^3\fg$, while trace--symbols ($\Bbb R^n[2]$)
correspond to the Cartan product $\fg\circledcirc S^2_0\Bbb R^{n+2}$.
The relevant parts of the composition series for these two
representations of $\fg$ can be determined as in the second order
case. From these, one computes the critical weights. In the tracefree
case, one obtains $-4$, $-3$, $-2$, $-4-n$, $-3-n$, $-2-n$,
$(-7-n)/2$, $(-4-n)/2$, $(-8-n)/3$, $(-8-2n)/3$, $(-6-n)/3$, and
$(-6-2n)/3$. For trace--type symbols, we get the critical weights
$-1$, $-2$, $-4$, $-5/2$, $-4/3$, $(-4-n)/2$, $(-4-n)/3$, $(-6-n)/3$,
and $(-4-2n)/3$.  These are the critical weights from
\cite{3rd-order}, plus quite a few additional ones. We'll comment on
that in \ref{3.7} below.

\smallskip

\noindent
\textbf{First order operators for $V^*\otimes W\cong\Bbb R^n$}. Here
the symbol representation decomposes as 
$$
\Bbb R^n\otimes\Bbb R^n=R_1\oplus R_2\oplus R_3=S^2_0\Bbb R^n\oplus
\La^2\Bbb R^n\oplus\Bbb R[2],
$$
or in weights $(2|2,0,\dots)\oplus (2|1,1,0,\dots)\oplus (2|0,\dots)$.
There is only one relevant level in the composition series of
$\fg\otimes\Bbb R^n$, which can be determined by decomposing the
tensor product $\fg_0\otimes\Bbb R^n$ into irreducibles. In terms of
weights, the result is $(1|2,1,0,\dots)\oplus (1|1,1,1,0,\dots)\oplus
2(1|1,0\dots)$, so the last irreducible component occurs with
multiplicity two. Decomposing the tensor products $R_i\otimes\Bbb
R^n$, one concludes that $S_1$ can only contain the first and a copy
of the last irreducible components, while $S_3$ can only contain one
copy of the last irreducible component. Consequently, there are three
critical weights for skew symmetric symbols (which turn out to be
$-1$, $-4$, and $-n$) but only two (namely $-3$ and $-2-n$) for
symmetric symbols. For trace type symbols we obtain only one critical
weight, namely $-2$, which agrees with the result from \ref{3.5}.

\smallskip

\noindent
\textbf{Second order operators for $V^*\otimes W\cong\Bbb R^n$}. Here
the symbol representation $S^2\Bbb R^n\otimes\Bbb R^n$ decomposes into
four irreducible components, in weight notation, it is given by 
$$
(3|3,0,\dots)\oplus (3|2,1,0,\dots)\oplus 2(3|1,0,\dots). 
$$ 
Here one of the two copies of $\Bbb R^n[2]$ is contained in $S^2_0\Bbb
R^n\otimes\Bbb R^n$, while the other comes from the trace part. Let us
write this decomposition as $R_1\oplus\dots\oplus R_4$, with $R_4$
coming from the trace part. From above, we know that $S^2\fg$ contains
the irreducible components $\circledcirc^2\fg$ and $S^2_0\Bbb
R^{n+2}$, which correspond to $S^2_0\Bbb R^n$ and $\Bbb R[2]\subset
S^2\Bbb R^n$, respectively. Consequently, we can determine the
relevant composition factors for $S_1$, $S_2$, and $S_3$ by
decomposing the tensor products of the composition factors of
$\circledcirc^2\fg$ as listed above with $\Bbb R^n$, and then checking
with of the components may be contained in each $S_i$. For $S_4$, we
proceed similarly with $S^2_0\Bbb R^{n+2}$ replacing
$\circledcirc^2\fg$. 

For the first relevant level (corresponding to filtration index $-1$),
we first have to decompose $(1|2,1,0,\dots)\otimes (1|1,0,\dots)$
which gives
$$
(2|3,1,0,\dots)\oplus (2|2,2,0,\dots)\oplus (2|2,1,1,0,\dots)\oplus 
(2|2,0,\dots)\oplus (2|1,1,0,\dots). 
$$
Second, $(1|1,0,\dots)\otimes (1|1,0,\dots)\cong (2|2,0,\dots)\oplus
(2|1,1,0,\dots)\oplus (2|0,\dots)$.

Looking at the tensor products $R_i\otimes\Bbb R^n$, we conclude that
$S_1$ can only contain $(2|3,1,0,\dots)$ and $(2|2,0,\dots)$, $S_3$
can only contain $(2|2,0,\dots)$ and $(2|1,1,0,\dots)$, while all components
of the first sum may occur in $S_2$.  Hence from this level, we get
the critical weights $-4$ and $-4-n$ for $R_1$. For $R_2$, we obtain
the critical weights $-1$, $-3$, $-5$, $-1-n$, and $-3-n$, while for
$R_3$, the critical weights are $-2$, $-4$, and $-2-n$.

The second relevant level is dealt with in an analogous way. The
result is that for $R_1$, we get the additional critical weights $-3$,
$-3-n$, $(-4-n)/2$, and $(-7-n)/2$. For $R_2$, we obtain $-3/2$,
$-7/2$, $(-1-n)/2$, $(-4-n)/2$, $(-7-n)/2$, $(-3-2n)/2$. Finally, for
$R_3$, we get the additional critical weights $-1$, $-5/2$, and
$(-4-n)/2$. A direct evaluation shows that for $R_4$ we exactly the
same critical weights as for $R_3$ (although the bundle involved is
different).

\subsection{Discussion and Remarks}\label{3.7}
(1) Note that the results in the examples from \ref{3.6} are
consistent with theorem \ref{T3.5}, which implies that in all the
cases discussed in \ref{3.6} all critical weights have to be negative.

\smallskip

\noindent
(2) From the examples of operators on densities discussed in \ref{3.6} it
is evident that the sets of critical weights we obtain with our
general procedure are far from being optimal. It is actually easy to
see why this happens, and even to partly improve the procedure, to get
smaller sets of critical weights. The point here is that part (1) of
proposition \ref{P3.1} can be heavily improved in special cases, and in
particular for the fundamental derivative on densities. In the case of
densities, already the values of a single fundamental derivative do not
exhaust $\Cal A^0M[w]$. On the contrary, projecting to $(\Cal A^0M/\Cal
A^1M)[w]\cong \La^2TM[w-2]\oplus\Cal E[w]$, the values always lie in
the density summand only. By naturality of the fundamental derivative,
this implies that higher order fundamental derivatives always will lie
in subbundles which are much smaller than the bundle $\Cal W^0M$ from
proposition \ref{P3.1}. 

Knowing this, one can run the analog of the procedure from \ref{3.2}
and \ref{3.3} on the quotient by the annihilator of this subbundle,
which will be significantly smaller than the bundle $\Cal VM$ we have
used. For this smaller quotient, there will be less irreducible
components in the individual subquotients and hence less critical
weights. In fact, it is easy to see directly that in the examples
discussed in \ref{3.6} most (but not all of) the superfluous critical
weights will disappear. 

\smallskip

\noindent
(3) In the case $V\otimes W\cong\Bbb R^n$ the set of critical weights
we have obtained in \ref{3.6} will be closer to the optimum than in
the case of densities. As we have noted, this case can be used to
study both quantizations for operators mapping sections of $\Cal E[w]$
to sections of $TM[w+\delta]$ and for operators mapping sections of
$T^*M[w]\cong TM[w-2]$ to sections of $\Cal E[w+\delta]$. While these
two cases are completely symmetric from our point of view, this is no
more true if one looks at the best possible sets of critical weights.
The point is that in the first case, the value of the splitting
operator will be paired with $D^{(k)}f\in\Ga(S^k\Cal A^*M[w])$ for
$f\in\Ga(\Cal E[w])$, and as discussed above, this has values in a
much smaller subbundle than just the filtration component of degree
zero. In the second interpretation, we will have to pair it with
$D^{(k)}\al\in \Ga(S^k\Cal A^*M\otimes T^*M[w])$ for
$\al\in\Ga(T^*M[w])$, and the values of this operator fill a more
substantial part of the filtration component of degree zero. Hence in
the first case, we can remove more superfluous critical weights than
in the second one.

\smallskip

\noindent
(4) There is a systematic way to derive explicit formulae for the
procedures we have developed in terms of distinguished connections
(e.g.~the Levi--Civita connections of the metrics in a conformal
class), but this becomes quickly rather tedious. In view of the
construction, the main point is to obtain an explicit formula for the
curved Casimir operator on irreducible components of $S^k\Cal AM$.
This can be done along the lines of proposition 2.2 of \cite{CasEx}
which holds (with obvious modifications) for general AHS--structures.

\begin{bibdiv}
\begin{biblist}
  
  \bib{IFFT}{article}{ author={Boniver, F.}, author={Mathonet, P.},
    title={IFFT-equivariant quantizations}, journal={J. Geom. Phys.},
    volume={56}, date={2006}, number={4}, pages={712--730},
    issn={0393-0440}, review={\MR{2199289 (2006i:53124)}}, }

\bib{3rd-order}{article}{
   author={Bouarroudj, Sofiane},
   title={Formula for the projectively invariant quantization on degree
   three},
   journal={C. R. Acad. Sci. Paris S\'er. I Math.},
   volume={333},
   date={2001},
   number={4},
   pages={343--346},
   issn={0764-4442},
   review={\MR{1854777 (2002f:53155)}},
}

\bib{BCEG}{article}{
   author={Branson, Thomas},
   author={{\v{C}}ap, Andreas},
   author={Eastwood, Michael},
   author={Gover, A. Rod},
   title={Prolongations of geometric overdetermined systems},
   journal={Internat. J. Math.},
   volume={17},
   date={2006},
   number={6},
   pages={641--664},
   issn={0129-167X},
   review={\MR{2246885 (2007h:58061)}},
}



\bib{tractors}{article}{
   author={{\v{C}}ap, Andreas},
   author={Gover, A. Rod},
   title={Tractor calculi for parabolic geometries},
   journal={Trans. Amer. Math. Soc.},
   volume={354},
   date={2002},
   number={4},
   pages={1511--1548 (electronic)},
   issn={0002-9947},
   review={\MR{1873017 (2003j:53033)}},
}

\bib{CasEx}{article}{
   author={{\v{C}}ap, Andreas},
   author={Gover, A. Rod},
   author={Sou{\v{c}}ek, Vladim{\'{\i}}r},
   title={Conformally Invariant Operators via Curved Casimirs:
   Examples},
   journal={Pure Appl. Math. Q.},
   status={to appear},
   eprint={arXiv:0808.1978},
}

\bib{Weyl}{article}{
   author={{\v{C}}ap, Andreas},
   author={Slov{\'a}k, Jan},
   title={Weyl structures for parabolic geometries},
   journal={Math. Scand.},
   volume={93},
   date={2003},
   number={1},
   pages={53--90},
   issn={0025-5521},
   review={\MR{1997873 (2004j:53065)}},
}

\bib{CSS2}{article}{
   author={{\v{C}}ap, A.},
   author={Slov{\'a}k, J.},
   author={Sou{\v{c}}ek, V.},
   title={Invariant operators on manifolds with almost Hermitian symmetric
   structures. II. Normal Cartan connections},
   journal={Acta Math. Univ. Comenian. (N.S.)},
   volume={66},
   date={1997},
   number={2},
   pages={203--220},
   issn={0862-9544},
   review={\MR{1620484 (2000a:53045)}},
}

\bib{CSS-BGG}{article}{
   author={{\v{C}}ap, Andreas},
   author={Slov{\'a}k, Jan},
   author={Sou{\v{c}}ek, Vladim{\'{\i}}r},
   title={Bernstein-Gelfand-Gelfand sequences},
   journal={Ann. of Math.},
   volume={154},
   date={2001},
   number={1},
   pages={97--113},
   issn={0003-486X},
   review={\MR{1847589 (2002h:58034)}},
}

\bib{Casimir}{article}{
   author={{\v{C}}ap, Andreas},
   author={Sou{\v{c}}ek, Vladim{\'{\i}}r},
   title={Curved Casimir operators and the BGG machinery},
   journal={SIGMA Symmetry Integrability Geom. Methods Appl.},
   volume={3},
   date={2007},
   pages={Paper 111, 17 pp.},
   issn={1815-0659},
   review={\MR{2366911}},
}

\bib{Cartier}{article}{
author={Cartier, P.},
title={Remarks on ``Lie algebra cohomology and the generalized
  Borel--Weil theorem'', by B. Kostant},
journal={Ann. of Math. (2)},
volume={74},
date={1961},
pages={388--390},
issn={0003-486X},
review={\MR{0142698 (26 \#267)}},
}

\bib{DLO}{article}{
   author={Duval, C.},
   author={Lecomte, P.},
   author={Ovsienko, V.},
   title={Conformally equivariant quantization: existence and uniqueness},
   journal={Ann. Inst. Fourier (Grenoble)},
   volume={49},
   date={1999},
   number={6},
   pages={1999--2029},
   issn={0373-0956},
   review={\MR{1738073 (2000k:53081)}},
}

\bib{Hansoul}{article}{
   author={Hansoul, Sarah},
   title={Existence of natural and projectively equivariant quantizations},
   journal={Adv. Math.},
   volume={214},
   date={2007},
   number={2},
   pages={832--864},
   issn={0001-8708},
   review={\MR{2349720 (2008i:53129)}},
}


\bib{Kostant}{article}{
   author={Kostant, Bertram},
   title={Lie algebra cohomology and the generalized Borel-Weil theorem},
   journal={Ann. of Math. (2)},
   volume={74},
   date={1961},
   pages={329--387},
   issn={0003-486X},
   review={\MR{0142696 (26 \#265)}},
}

\bib{Kroeske}{thesis}{
   author={Kroeske, Jens},
   title={Invariant Bilinear Differential Pairings on Parabolic
     Geometries}, 
   type={Ph.D. thesis},
   address={University of Adelaide, Australia},
   date={2008},
}

\bib{LO}{article}{
   author={Lecomte, P. B. A.},
   author={Ovsienko, V. Yu.},
   title={Projectively equivariant symbol calculus},
   journal={Lett. Math. Phys.},
   volume={49},
   date={1999},
   number={3},
   pages={173--196},
   issn={0377-9017},
   review={\MR{1743456 (2001e:58035)}},
}

\bib{Mathonet-Radoux}{article}{
   author={Mathonet, P.},
   author={Radoux, F.},
   title={Cartan connections and natural and projectively equivariant
   quantizations},
   journal={J. Lond. Math. Soc. (2)},
   volume={76},
   date={2007},
   number={1},
   pages={87--104},
   issn={0024-6107},
   review={\MR{2351610 (2008m:53206)}},
}
\bib{Mathonet-Radoux2}{article}{
   author={Mathonet, P.},
   author={Radoux, F.},
   title={Existence of natural and conformally invariant quantizations
     of arbitrary symbols}, 
   eprint={arXiv:08113710},
}

\bib{Yamaguchi}{article}{
   author={Yamaguchi, Keizo},
   title={Differential systems associated with simple graded Lie algebras},
   conference={
      title={Progress in differential geometry},
   },
   book={
      series={Adv. Stud. Pure Math.},
      volume={22},
      publisher={Math. Soc. Japan},
      place={Tokyo},
   },
   date={1993},
   pages={413--494},
   review={\MR{1274961 (95g:58263)}},
}

\end{biblist}
\end{bibdiv}

\end{document}